\newtheorem{thm}{Theorem}
\newtheorem{lem}[thm]{Lemma}
\newtheorem{example}{Example}
\newtheorem{definition}{Definition}
\newtheorem{remark}{Remark}
\newtheorem{theorem}{Theorem}
\journal{}
\begin{document}

\begin{frontmatter}

\title{Preconditioned geometric iterative methods for cubic B-spline interpolation curves}

\author[first]{Chengzhi Liu}
\cortext[cor]{lizhang02121@126.com}
\author[first]{Yue Qiu}
\author[second]{Li Zhang\corref{cor}}
\address[first]{School of Mathematics and Finance, Hunan University of Humanities, Science and Technology, Loudi, P.R. China}
\address[second]{Data Recovery Key Laboratory of Sichuan Province, College of Mathematics and Information Science, Neijiang Normal University, Neijiang 641100, P. R. China}

\begin{abstract}
The geometric iterative method (GIM) is widely used in data interpolation/fitting, but its slow convergence affects the computational efficiency. Recently, much work was done to guarantee the acceleration of GIM in the literature. In this work, we aim to further accelerate the rate of convergence by introducing a preconditioning technique. After constructing the preconditioner, we preprocess the progressive iterative approximation (PIA) and its variants, called the preconditioned GIMs. We show that the proposed preconditioned GIMs converge and the extra computation cost brought by the preconditioning technique is negligible. Several numerical experiments are given to demonstrate that our preconditioner can accelerate the convergence rate of PIA and its variants.
\end{abstract}

\begin{keyword}
progressive iterative approximation; preconditioning technique; geometric iterative method; data interpolation; cubic B-spline curve.
\end{keyword}

\end{frontmatter}


\section{Introduction}\label{sec1}
Data fitting arises in a variety of scientific and engineering applications, including geometric modeling, image processing, data mining, and others.
The rapid development of science and technology makes it easy to access massive data, which also brings a great challenge to data fitting techniques.
In recent years, the rise of the geometric iterative method (GIM) provided a stable and highly efficient way for data fitting.
The GIM has the advantages of simple iterative format, stable convergence, and clear geometric meaning and thus intrigued many scholars for years (\cite{Lin201722, Lin2013}).

The GIM originated from the profit and loss property for data fitting by using uniform cubic B-spline curves (\cite{Qi1975}). This property was extended to non-uniform cubic B-spline curves and surfaces over two decades later (\cite{Lin2004}). In 2005, Lin et al. pointed out that the normalized and totally positive basis has the property of profit and loss and called it progressive iterative approximation (PIA) (\cite{Lin2005}). Thereafter, more systematic research on the PIA emerged and various forms of PIA were put forward consequently, see \cite{Lin2010,Chen2011,Lin2011,Deng2014, Lin2017,LiuM20181,Hamza2020}. In 2007, another branch of the GIM, namely geometric interpolation (GI), was proposed by Takashi Maekawa et al \cite{Maekawa2007}. The principle of the GI is similar to that of the PIA. Since then, several similar approaches were put forward, e.g., \cite{Gofuku2009, Lin20101,Xiong2012,Yuki2012}. In \cite{Lin201722}, Lin et al. summarized the PIAs, the GIs, and their applications. Due to their clear geometric meaning, such a class of iterative methods is collectively referred to as the GIM.

Although the GIMs have the advantage of stable convergence, they are likely to suffer from slow convergence for problems that arise from typical applications such as curves and surfaces reconstruction (\cite{Liu20211,Liu2021,Liu2020,Liu20201}). To remedy this, much work was done to accelerate the GIMs' convergence rate, and several acceleration methods and alternatives have been proposed in the literature, see \cite{Liu20211,Liu2021,Liu20201,Liu2020,Lu2010,Carnicer2010,Carnicer2011,Zhang2018,Ebrahimi2019,Liu2015,Wang2018,Hu2020,Liu2022,Shou2022} and so on.

It is well known that iterative methods combined with preconditioning techniques work surprisingly well when the preconditioners are selected appropriately. Consequently, preconditioning techniques for the acceleration of the PIA were deservedly proposed \cite{Liu2021,Liu20201,Liu2020}. Despite the fact that preconditioning techniques could accelerate the rate of convergence significantly, they always require extra costs to compute preconditioning operations. Very often, one has to take into account the convergence rate and computational complexity when constructing the preconditioner. To reduce the computational complexity, the inexact versions of preconditioned PIA were proposed (\cite{Liu2021,Liu20201}). In order to reduce the computational cost caused by the preconditioning operation, we in this paper study the preconditioning technique for the GIMs, in which the convergence is accelerated and the extra cost is small enough to ignore.

The rest of this paper is organized as follows: After reviewing the PIA for cubic B-spline curves in Section \ref{sec2}, we fetch out some variants of PIA. In Section \ref{sec3}, we exploit the preconditioned geometric iterative methods for cubic B-spline curves and analyze their convergence. Section \ref{sec4} gives some numerical examples to illustrate the acceleration of the preconditioning technique. We end with some conclusions in the last section.

\section{Related work}\label{sec2}
\subsection{PIA and its variants}
Consider interpolating a given set of organized data points $\{\boldsymbol{p}_{i}\}_{i=1}^{n}$ in $\mathbb{R}^3\ \textrm{or}\ \mathbb{R}^3$, whose parameters are $t_i(i=1,2,\ldots,n)$, respectively. We construct a knots vector with multiple-knots
$\boldsymbol{T}=\{t_i\}_{i=-2}^{n+3}$ subject to $t_{-2}=t_{-1}=t_{0}=t_{1}<t_{2}<\ldots<t_{n}=t_{n+1}=t_{n+2}=t_{n+3}$, then we define a cubic B-spline basis $\{N_{i}^{3}(t)\}_{i=-2}^{n-1}$ on the knots vector $\boldsymbol{T}$.

In the PIA, we begin with an initial interpolation B-spline curve
$
\boldsymbol{C}^{(0)}(t)=\sum_{i=-2}^{n-1}\boldsymbol{p}_{i+2}^{(0)}N_{i}^{3}(t),\ t\in[t_1,t_n],
$
where $\boldsymbol{p}_{0}^{(0)}=\boldsymbol{p}_{1},\ \boldsymbol{p}_{i}^{(0)}=\boldsymbol{p}_{i},i= 1,\ldots, n, \ \boldsymbol{p}_{n+1}^{(0)}=\boldsymbol{p}_{n}.$
Then we compute the difference vector $\boldsymbol{\delta}_{i}^{(0)}=\boldsymbol{p}_{i}- \boldsymbol{C}^{(0)}(t_i), i= 1,\ldots, n$ and update the control points according to
\begin{displaymath}
\left\{ \begin{array}{ll}
\boldsymbol{p}_{i}^{(1)}=\boldsymbol{p}_{i}^{(0)}+ \boldsymbol{\delta}_{i}^{(0)},\  \text{if}\ i\in \{1,\ldots, n\};\\
\boldsymbol{p}_{i}^{(1)}=\boldsymbol{p}_{i}^{(0)}, \  \qquad \quad \text{if}\ i\in \{0, n+1\}.
 \end{array} \right.
\end{displaymath}
Consequently, we can update the interpolation B-spline curve
$\boldsymbol{C}^{(1)}(t)=\sum_{i=-2}^{n-1}\boldsymbol{p}_{i+2}^{(1)}N_{i}^{3}(t),\ t\in[t_1,t_n].$

Assume that we have obtained the approximate interpolation B-spline curve $\boldsymbol{C}^{(k)}(t)$ after $k$ iterations. Then we can compute $\boldsymbol{\delta}_{i}^{(k)}=\boldsymbol{p}_{i}-\boldsymbol{C}^{(k)}(t_i), i= 1,\ldots, n$ and generate the $(k+1)$-th approximate interpolation B-spline curve
$\boldsymbol{C}^{(k+1)}(t)=\sum_{i=-2}^{n-1}\boldsymbol{p}_{i+2}^{(k+1)}N_{i}^{3}(t),\ t\in[t_1,t_n],$
where
\begin{equation}\label{equ1:point11}
\left\{ \begin{array}{ll}
\boldsymbol{p}_{i}^{(k+1)}=\boldsymbol{p}_{i}^{(k)}+ \boldsymbol{\delta}_{i}^{(k)},\  \text{if}\ i\in \{1,\ldots, n\};\\
\boldsymbol{p}_{i}^{(k+1)}=\boldsymbol{p}_{i}^{(k)}, \ \qquad \quad \text{if}\ i\in \{0, n+1\}.
 \end{array} \right.
\end{equation}
Thus we obtain a sequence of cubic B-spline curves $\{\boldsymbol{C}^{(k)}(t)\}_{k=0}^{\infty}$ that approximately interpolate $\{\boldsymbol{p}_{i}\}_{i=1}^{n}$. The method to generate the curves sequence is known as the PIA. We note in \cite{Lin2004} that the limit of curves sequence $\{\boldsymbol{C}^{(k)}(t)\}_{k=0}^{\infty}$ interpolates $\{\boldsymbol{p}_{i}\}_{i=1}^{n}$, i.e.,
$  \lim\limits_{k\rightarrow\infty}\boldsymbol{C}^{(k)}(t_i)=\boldsymbol{p}_{i},\ i=1,2,\ldots,n.$

Let $\boldsymbol{p}=[\boldsymbol{p}_1,\ldots,\boldsymbol{p}_n]^{T}$, $\boldsymbol{p}^{(k)}=[\boldsymbol{p}_1^{(k)},\ldots,\boldsymbol{p}_n^{(k)}]^{T}$. Then the PIA format for updating the control points of the curves sequence can be arranged in the matrix form
\begin{equation} \label{equ:matrixPIA}
\boldsymbol{p}^{(k+1)} = (I-B)\boldsymbol{p}^{(k)} + \boldsymbol{p},
\end{equation}
where $I$ is the identity matrix of order $n$, and $B$ is the so-called collocation matrix, i.e.,
\begin{equation}\label{equ:collm}
  \small
 B =  \left[ {\begin{array}{*{20}c}
   1 & 0 &  &  & \\
   N_{-1}^{3}(t_2) & N_{0}^{3}(t_2) & N_{1}^{3}(t_2) &   \\
    &  \ddots  &  \ddots  &  \ddots  & {}  \\
    &  & N_{n-4}^{3}(t_{n-1}) & N_{n-3}^{3}(t_{n-1}) & N_{n-2}^{3}(t_{n-1}) \\
    &  &  & 0  & 1  \\
\end{array}} \right].
\end{equation}

Note in \cite{Liu2020} that the iteration \eqref{equ:matrixPIA} is mathematically equivalent to the Richardson method for solving the collocation equations
\begin{equation} \label{equ:collequ}
B\boldsymbol{x} = \boldsymbol{p}.
\end{equation}

\subsection{Variants of PIA}
To improve the convergence of PIA, some acceleration methods were proposed. Note that different adjustment strategies for updating the control points result in variants of PIA, some of them are listed as follows (the adjustment vectors are boxed):
\begin{enumerate}
          \item[(1)] \textbf{WPIA} (\cite{Lu2010}). At each iteration of WPIA, the difference vector $\boldsymbol{\delta}_{i}^{(k)}$ is multiplied by a weight $\omega\ (0<\omega<2)$ when updating the control points, i.e.,
          \begin{displaymath}
          \begin{split}
                \boldsymbol{p}_{i}^{(k+1)}&=\boldsymbol{p}_{i}^{(k)}+ \omega\boldsymbol{\delta}_{i}^{(k)}\\
                      &=\boldsymbol{p}_{i}^{(k)}+\boxed{\omega(\boldsymbol{p}_{i}- \boldsymbol{p}_{i-1}^{(k)}N_{i-3}^{3}(t_i)-\boldsymbol{p}_{i}^{(k)}N_{i-2}^{3}(t_i)-\boldsymbol{p}_{i+1}^{(k)}N_{i-1}^{3}(t_i))}, i = 1, 2, \ldots, n.
          \end{split}
          \end{displaymath}

          \item[(2)] \textbf{Jacobi--PIA} (\cite{Liu2015}). At each iteration of WPIA, the difference vector $\boldsymbol{\delta}_{i}^{(k)}$ is multiplied by $\frac{1}{N_{i-2}^{3}(t_i)}$ when updating the control points, i.e.,
          \begin{displaymath}
          \begin{split}
                \boldsymbol{p}_{i}^{(k+1)}&=\boldsymbol{p}_{i}^{(k)}+ \frac{1}{N_{i-2}^{3}(t_i)}\boldsymbol{\delta}_{i}^{(k)}\\
                      &=\boldsymbol{p}_{i}^{(k)}+\boxed{\frac{\boldsymbol{p}_{i}- \boldsymbol{p}_{i-1}^{(k)}N_{i-3}^{3}(t_i)-\boldsymbol{p}_{i}^{(k)}N_{i-2}^{3}(t_i)-\boldsymbol{p}_{i+1}^{(k)}N_{i-1}^{3}(t_i)}{N_{i-2}^{3}(t_i)}}, i = 1, 2, \ldots, n.
          \end{split}
          \end{displaymath}
          \item[(3)] \textbf{GS--PIA} (\cite{Wang2018}). At each iteration of GS--PIA, we make full use of the calculated control points at the current step when generating the $(k+1)$-th B-spline curve, i.e.,
          $$\boldsymbol{p}_{i}^{(k+1)}=\boldsymbol{p}_{i}^{(k)} + \boxed{\frac{\boldsymbol{p}_{i} - \boldsymbol{p}_{i-1}^{(k+1)}{N_{i-3}^{3}(t_i)}  - \boldsymbol{p}_{i}^{(k)} N_{i-2}^{3}(t_i) - \boldsymbol{p}_{i+1}^{(k)}N_{i-1}^{3}(t_i)}{N_{i-2}^{3}(t_i)}}, i = 1, 2, \ldots, n.$$
          \item[(4)] \textbf{SOR--PIA} (\cite{Shou2022}). Based on the GS--PIA, we in SOR--PIA multiply the adjustment vector of GS--PIA by a weight $\omega\ (0<\omega<2)$
          $$\boldsymbol{p}_{i}^{(k+1)}=\boldsymbol{p}_{i}^{(k)} + \boxed{\omega \frac{\boldsymbol{p}_{i}-\boldsymbol{p}_{i-1}^{(k+1)}N_{i-3}^{3}(t_i)-\boldsymbol{p}_{i}^{(k)} N_{i-2}^{3}(t_i)-\boldsymbol{p}_{i+1}^{(k+1)}N_{i-1}^{3}(t_i)}{N_{i-2}^{3}(t_i)}}, i = 1, 2, \ldots, n.$$
\end{enumerate}

Given a matrix $A=(a_{ij})\in \mathbb{R}^{n\times n}$, we denote by $D_{A}$, $-L_{A}$, and $-U_{A}$ the diagonal, the strict lower part, and the strict upper part of $A$, respectively. By direct deduction, the matrix forms of the WPIA, the Jacobi--PIA, the GS--PIA and the SOR--PIA are
\begin{itemize}
          \item WPIA: $\boldsymbol{p}^{(k+1)} = (I-\omega B)\boldsymbol{p}^{(k)} + \omega\boldsymbol{p}$,

          \item Jacobi--PIA: $\boldsymbol{p}^{(k+1)} = D_{A}^{-1}(L_{B}+U_{B})\boldsymbol{p}^{(k)} + D_{B}^{-1}\boldsymbol{p}$,

          \item GS--PIA: $\boldsymbol{p}^{(k+1)} = D_{B}^{-1}(L_{B}\boldsymbol{p}^{(k+1)} + U_{B}\boldsymbol{p}^{(k)}+ \boldsymbol{p})$ or equivalently $\boldsymbol{p}^{(k+1)} = (D_{B}-L_{B})^{-1}U_{B}\boldsymbol{p}^{(k)} + (D_{B}-L_{B})^{-1}\boldsymbol{p}$,

          \item SOR--PIA: $\boldsymbol{p}^{(k+1)} =
          (1-\omega)\boldsymbol{p}^{(k)}+\omega( D_{B}^{-1}(L_{B}\boldsymbol{p}^{(k+1)} + U_{B}\boldsymbol{p}^{(k)}+ \boldsymbol{p}))$ or equivalently $\boldsymbol{p}^{(k+1)} = (D_{B}-\omega L_{B})^{-1}((1-\omega)D_{B}+\omega U_{B})\boldsymbol{p}^{(k)} + \omega(D_{B}-\omega L_{B})^{-1}\boldsymbol{p}$.
\end{itemize}

\begin{remark}\label{rem1}
   We remark here that all the aforementioned iterations can be seen as the basic iterative methods for solving \eqref{equ:collequ}. Let
   $B=M-N$ be a splitting of $B$, and $M$ be an invertible matrix. Then the splitting iteration for solving \eqref{equ:collequ} can be written as
   \begin{equation}\label{equ:splittingitertion}
     \boldsymbol{x}^{(k+1)}=M^{-1}N\boldsymbol{x}^{(k)}+M^{-1}\boldsymbol{p},\quad k=0,1,\ldots,
   \end{equation}
   where $\boldsymbol{x}^{(0)}$ is the initial guess, and $M^{-1}N$ is called iteration matrix. It is well known that the iteration \eqref{equ:splittingitertion} is convergent of the spectral radius of the iteration matrix is less than 1, i.e., $\rho(M^{-1}N)<1$.

   For the $D_{B}$, $L_{B}$, and $U_{B}$ defined above, we have
    \begin{itemize}
          \item If $M = I$, the splitting iteration \eqref{equ:splittingitertion} is the Richardson iteration for solving the collocation system \eqref{equ:collequ} and is equivalent to the PIA.
          \item If $M = \frac{1}{\omega} I$, the splitting iteration \eqref{equ:splittingitertion} is the modified Richardson iteration for solving the collocation system \eqref{equ:collequ} and is equivalent to the WPIA.
          \item If $M = D_{B}$, the splitting iteration \eqref{equ:splittingitertion} is the Jacobi iteration for solving the collocation system \eqref{equ:collequ} and is equivalent to the Jacobi--PIA.
          \item If $M = D_{B} - L_{B}$, the splitting iteration \eqref{equ:splittingitertion} is the Gauss--Seidel iteration for solving the collocation system \eqref{equ:collequ} and is equivalent to the GS--PIA.
          \item If $M = \frac{1}{\omega}(D_{B} - \omega L_{B})$, the splitting iteration \eqref{equ:splittingitertion} is the SOR iteration for solving the collocation system \eqref{equ:collequ} and is equivalent to the SOR--PIA.
\end{itemize}
\end{remark}

\section{Preconditioning geometric iterative method}\label{sec3}
Although the PIA and its variants mentioned above are convergent, researchers are more likely to exploit iteration formats with faster convergence because they are more efficient in data interpolation. It is well known that preconditioning is a key technique for improving the efficiency and robustness of iterative methods. A suitable and problem-dependent choice of preconditioner can often achieve unexpected results. In this section, we will discuss the preconditioned versions of geometric iterative methods.
\subsection{Construction of the preconditioner}
The preconditioner for the GIM is defined as
\begin{equation}\label{equ:precon}
Q = I + S,
\end{equation}
where $S$ is a super-diagonal matrix whose entries are the minus of the super-diagonal entries of $B$, i.e.,
\begin{displaymath}
  \small
 S =  \left[ {\begin{array}{*{20}c}
   0 & 0 &  &  & \\
    & 0 & -N_{1}^{3}(t_2) &   \\
    &    &  \ddots  &  \ddots  & {}  \\
    &  &  & 0 & -N_{n-2}^{3}(t_{n-1}) \\
    &  &  &  & 0  \\
\end{array}} \right].
\end{displaymath}
\subsection{Preconditioning techniques}
We preconditioning the system \eqref{equ:collequ} with the preconditioner $Q$ and obtain the preconditioned system
\begin{equation}\label{equ:precollocation}
  QB\boldsymbol{x}=Q\boldsymbol{p}
\end{equation}
\begin{remark}
It should be noted that most of the preconditioning techniques require calculating the inverse of the preconditioner, and additional computational costs brought by inversion should be taken into account. In the preconditioned system \eqref{equ:precollocation}, there is no need for us to compute the inverse of the preconditioner. And the additional computation bought by our preconditioning technique only involves with the multiplication of a bi-diagonal matrix and a tri-diagonal matrix. In this way, the sparsity can be fully utilized to reduce the amount of computation.
\end{remark}

From \eqref{equ:collm} and \eqref{equ:precon}, we have
\begin{equation} \label{equ:sorPIA}
  \begin{split}
    QB &= (I+S)(D_{B}-L_{B}-U_{B})\\
       &= D_{B}-L_{B}-U_{B} +SD_{B}-SL_{B}-SU_{B}\\
       &= (D_{B}-SL_{B})-L_{B}-(U_{B} - SD_{B} + SU_{B}),
  \end{split}
  \end{equation}
then ${D}_{QB}=D_{B}-SL_{B},\ -{L}_{QB} = - L_{B}$, and $-{U}_{QB} = -U_{B} + SD_{B} - SU_{B}$ are the diagonal, the strict lower part, and the strict upper part of $QB$, respectively. By employing the Richardson, the modified Richardson, the Jacobi, the Gauss-Seidel, and the SOR iterations to the preconditioned system \eqref{equ:precollocation}, we can obtain the preconditioned GIMs given as follows:
\begin{enumerate}
  \item[(1)] \textbf{Preconditioned PIA:}
  \begin{equation} \label{equ:PPIA}
  \begin{split}
    \boldsymbol{p}^{(k+1)} &= (I-QB)\boldsymbol{p}^{(k)} + Q\boldsymbol{p},\\
                           &= \boldsymbol{p}^{(k)} + Q(\boldsymbol{p}-B\boldsymbol{p}^{(k)})
  \end{split}
  \end{equation}
  \item[(2)] \textbf{Preconditioned WPIA:}
  \begin{equation} \label{equ:PWPIA}
  \begin{split}
    \boldsymbol{p}^{(k+1)} &= (I-\omega QB)\boldsymbol{p}^{(k)} + \omega Q\boldsymbol{p},\\
                           &= \boldsymbol{p}^{(k)} + \omega Q(\boldsymbol{p}- B\boldsymbol{p}^{(k)})
  \end{split}
  \end{equation}
  \item[(3)] \textbf{Preconditioned Jacobi--PIA:}
  \begin{equation} \label{equ:PjPIA}
  \begin{split}
    \boldsymbol{p}^{(k+1)} &= - {D}_{QB}^{-1}({L}_{QB}+{U}_{QB})\boldsymbol{p}^{(k)} + {D}_{QB}^{-1} Q\boldsymbol{p},\\
                           &= \boldsymbol{p}^{(k)} + {D}_{QB}^{-1}Q(\boldsymbol{p}- B\boldsymbol{p}^{(k)}).
    \end{split}
  \end{equation}
  \item[(4)] \textbf{Preconditioned GS--PIA:}
  \begin{equation} \label{equ:gsPIA}
    \boldsymbol{p}^{(k+1)} = ({D}_{QB}-{L}_{QB})^{-1}{U}_{QB}\boldsymbol{p}^{(k)} + ({D}_{QB}-{L}_{QB})^{-1} Q\boldsymbol{p}.
  \end{equation}
  \item[(5)] \textbf{Preconditioned SOR--PIA:}
  \begin{equation} \label{equ:sorPIA}
    \boldsymbol{p}^{(k+1)} = ({D}_{QB}-\omega {L}_{QB})^{-1}\left((1-\omega){D}_{QB}+\omega {U}_{QB}\right)\boldsymbol{p}^{(k)} + \omega({D}_{QB}-\omega{L}_{QB})^{-1} Q\boldsymbol{p}.
  \end{equation}
\end{enumerate}

The relaxation factors $\omega$ in \eqref{equ:PWPIA} and \eqref{equ:sorPIA} are introduced to accelerate the rate of convergence. When $\omega=1$, the preconditioned WPIA will be reduced to the preconditioned PIA, and the preconditioned SOR--PIA will be reduced to the preconditioned GS--PIA. Therefore, we need to find the optimal choices of $\omega$, which will be given in the following subsection.
\subsection{Convergence analysis}
Before analyzing the convergence, we introduce some definitions and conclusions.
\begin{definition}[\cite{Minc1988,Horn1985,Saad2009,Fallat2011}]\label{deftotal}
  A matrix $A$ is said to be nonnegative if all its entries are nonnegative, denoted by $A\geq 0$. A matrix is said to be stochastic if it is nonnegative and all its row sums are $1$. A matrix is said to be totally positive if all its minors are nonnegative. A matrix $A=(a_{ij})\in \mathbb{R}^{n\times n}$ is said to be sign-regular if the sign of the $(i,j)$-th entry is $(-1)^{i+j}$. A matrix $A=(a_{ij})\in \mathbb{R}^{n\times n}$ is said to be a Z-matrix if $a_{ij}\leq 0, i\neq j$. A matrix $A=(a_{ij})\in \mathbb{R}^{n\times n}$ is said to be an M-matrix if $A$ is a Z-matrix and $A^{-1}\geq 0$. Let $A=M-N$ be the splitting of $A=(a_{ij})\in \mathbb{R}^{n\times n}$. Then the pair of matrices $M$, $N$ is a regular splitting of $A$ if $M$ is nonsingular and $M^{-1}$ and $N$ are nonnegative.
\end{definition}
\begin{definition}\label{defcom}
  Let $A=(a_{ij})\in \mathbb{R}^{n\times n}$. Then the matrix $\langle A \rangle=(\langle a_{ij}\rangle)$ is called the comparison matrix of $A$ if
  \begin{displaymath}
\langle a_{ij}\rangle =\left\{ \begin{array}{ll}
|a_{ij}|,&  i=j;\\
-|a_{ij}|,& i\neq j.
 \end{array} \right.
\end{displaymath}
\end{definition}
\begin{lem}[\cite{Minc1988}]\label{lemnonnegative}
  Let $A,C\in \mathbb{R}^{n\times n}$ be nonnegative matrices. Then, $A+C\geq 0$ and $AC\geq 0$.
\end{lem}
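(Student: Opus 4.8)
The plan is to argue entrywise, directly from Definition~\ref{deftotal}, recalling that ``nonnegative'' for a matrix there means that every entry is a nonnegative real number (and not positive semidefiniteness in the spectral sense). Fix indices $i,j\in\{1,\ldots,n\}$. The $(i,j)$-th entry of $A+C$ is $a_{ij}+c_{ij}$, which is nonnegative since $a_{ij}\geq 0$ and $c_{ij}\geq 0$ by hypothesis; as $i$ and $j$ were arbitrary, $A+C\geq 0$. Similarly, the $(i,j)$-th entry of $AC$ is $\sum_{k=1}^{n}a_{ik}c_{kj}$, and each summand $a_{ik}c_{kj}$ is a product of two nonnegative reals, hence nonnegative, so the whole finite sum is nonnegative; again, $i$ and $j$ being arbitrary yields $AC\geq 0$.

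There is essentially no obstacle here: the claim unwinds immediately to the elementary facts that sums and products of nonnegative real numbers are nonnegative, and that matrix addition and multiplication are built from exactly these operations. The only point deserving a moment's attention is to keep using the entrywise notion of nonnegativity fixed in Definition~\ref{deftotal} throughout, rather than confusing it with any spectral notion; once that convention is in force, the two entrywise computations above constitute the entire proof. This is precisely why the result can be quoted without further comment from the standard reference \cite{Minc1988}, and it is exactly in this entrywise form that the lemma will be invoked later when establishing nonnegativity of the matrices arising from the splitting of $QB$.
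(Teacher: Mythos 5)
Your entrywise argument is correct and complete; the paper itself states this lemma without proof, citing \cite{Minc1988}, and the standard proof is exactly the one you give (closure of nonnegative reals under addition and multiplication, applied to each entry of $A+C$ and to each sum $\sum_{k}a_{ik}c_{kj}$). Your remark that nonnegativity here is the entrywise notion of Definition~\ref{deftotal}, not a spectral one, is also the right convention and matches how the lemma is used later for $\left(\langle QB\rangle\right)^{-1}$.
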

\begin{lem}[\cite{Fallat2011}]\label{lemsigregular}
  Let $A\in \mathbb{R}^{n\times n}$ be a be sign-regular matrix. Then, there exists a sign matrix $J = \emph{diag}\big(1,-1,1,\cdots,$ $ (-1)^{(n+1)}\big)$ such that $JAJ$ is nonnegative, and $JAJ=|A|$.
\end{lem}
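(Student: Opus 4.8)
The plan is to prove the statement by a direct entrywise computation, exploiting the explicit form of the sign matrix $J$. First I would record the structure of $J=\mathrm{diag}\big(1,-1,1,\ldots,(-1)^{n+1}\big)$: its entries are $J_{ii}=(-1)^{i+1}$ and $J_{ij}=0$ for $i\neq j$, and in particular $J^{2}=I$, so that $A\mapsto JAJ$ is an involution and in particular invertible. Consequently, for any $A=(a_{ij})$ the congruence $JAJ$ simply rescales each entry, namely
\begin{equation*}
(JAJ)_{ij}=J_{ii}\,a_{ij}\,J_{jj}=(-1)^{i+1}(-1)^{j+1}a_{ij}=(-1)^{i+j}a_{ij},
\end{equation*}
since $(-1)^{2}=1$. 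This reduces the lemma to understanding the sign of $(-1)^{i+j}a_{ij}$ for each pair $(i,j)$.

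Next I would invoke the hypothesis that $A$ is sign-regular in the sense of Definition \ref{deftotal}: the sign of the $(i,j)$-th entry of $A$ is $(-1)^{i+j}$, which we may write as $a_{ij}=(-1)^{i+j}\lvert a_{ij}\rvert$ (in particular, for $i=j$ the exponent is even, so $a_{ii}\ge 0$ and $\lvert a_{ii}\rvert=a_{ii}$; when $a_{ij}=0$ the identity is trivial). Substituting into the identity above yields
\begin{equation*}
(JAJ)_{ij}=(-1)^{i+j}(-1)^{i+j}\lvert a_{ij}\rvert=(-1)^{2(i+j)}\lvert a_{ij}\rvert=\lvert a_{ij}\rvert\geq 0
\end{equation*}
for every $i,j$. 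Reading this across all indices gives simultaneously that $JAJ\geq 0$ and that $JAJ=\lvert A\rvert$, which is exactly the assertion.

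I do not expect any genuine obstacle here; the computation is essentially forced once $J$ is written down. The only point demanding a little care is the parity bookkeeping — checking that the two sign factors $J_{ii}$ and $J_{jj}$ combine to $(-1)^{i+j}$ and then cancel against, rather than reinforce, the sign of $a_{ij}$ — together with spelling out the sign convention on the diagonal so that the equality $JAJ=\lvert A\rvert$ is consistent there as well. Lemma \ref{lemnonnegative} is not needed for this argument itself, although the nonnegativity of $JAJ$ established here is precisely what will make that lemma applicable later in the convergence analysis.
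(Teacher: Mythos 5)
Your computation is correct and complete: under the paper's Definition \ref{deftotal} of sign-regular (each entry $a_{ij}$ carries the checkerboard sign $(-1)^{i+j}$), the identity $(JAJ)_{ij}=(-1)^{i+j}a_{ij}=|a_{ij}|$ follows exactly as you write it, and this is the only argument needed. The paper itself states this lemma as a citation to \cite{Fallat2011} without proof, so there is no in-paper argument to compare against; your direct entrywise verification is the natural one, and your remark that Lemma \ref{lemnonnegative} is not needed here is also accurate.
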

\begin{lem}[\cite{Fallat2011}]\label{lemtotal}
  Let $A\in \mathbb{R}^{n\times n}$ be a nonsingular matrix. Then, $A$ is totally positive if and only if $A^{-1}$ is sign-regular and $JA^{-1}J=|A^{-1}|\geq 0$, where $J = \emph{diag}\left(1,-1,1,\cdots, (-1)^{(n+1)}\right)$.
\end{lem}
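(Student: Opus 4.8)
The plan is to read the equivalence off from the classical identity relating the minors of a nonsingular matrix to those of its inverse — Jacobi's identity (the one expressing minors of $A^{-1}$ through complementary minors of $A$) — combined with the elementary sign bookkeeping produced by conjugating with the signature matrix $J=\mathrm{diag}(1,-1,\dots,(-1)^{n+1})$. First I would observe that, by Lemma~\ref{lemsigregular} and its trivial converse, the two conditions on the right-hand side of the statement are one and the same: ``$A^{-1}$ is sign-regular'' says exactly that the sign of $(A^{-1})_{ij}$ is $(-1)^{i+j}$, which in turn is exactly ``$JA^{-1}J=|A^{-1}|\ge 0$'' — with the usual convention that zero entries are admitted, which is unavoidable here since inverses of nonsingular totally positive matrices (e.g. block-diagonal ones) may have zero entries. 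The engine is the identity
\[
\det\!\big((A^{-1})[\alpha\,|\,\beta]\big)=(-1)^{s(\alpha)+s(\beta)}\,\frac{\det\!\big(A[\beta^{c}\,|\,\alpha^{c}]\big)}{\det A},\qquad s(\gamma):=\sum_{i\in\gamma}i,
\]
valid for index sets $\alpha,\beta$ of equal size, where $A[\gamma\,|\,\delta]$ is the submatrix on rows $\gamma$, columns $\delta$, and $\gamma^{c}$ its complement. Conjugating the $k\times k$ block on rows $\alpha$, columns $\beta$ by the corresponding pieces of $J$ multiplies its determinant by $\prod_{i\in\alpha}(-1)^{i+1}\cdot\prod_{j\in\beta}(-1)^{j+1}=(-1)^{s(\alpha)+s(\beta)}$, so the two sign factors cancel and I get the clean correspondence $\det\!\big((JA^{-1}J)[\alpha\,|\,\beta]\big)=\det\!\big(A[\beta^{c}\,|\,\alpha^{c}]\big)/\det A$ for all $\alpha,\beta$, together with $\det(JA^{-1}J)=1/\det A$.

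For the forward direction I would argue: if $A$ is nonsingular and totally positive, then $\det A$, being a nonzero minor of a totally positive matrix, is positive; hence by the correspondence every minor of $JA^{-1}J$ is a nonnegative minor of $A$ divided by $\det A>0$, so $JA^{-1}J$ is itself totally positive, and in particular $JA^{-1}J\ge 0$ entrywise — which is exactly the assertion that $A^{-1}$ is sign-regular and $JA^{-1}J=|A^{-1}|\ge 0$. (If only the entrywise conclusion is wanted, one needs only the case $|\alpha|=|\beta|=1$, i.e. the adjugate formula $(A^{-1})_{ij}=(-1)^{i+j}\det(A[\{j\}^{c}\,|\,\{i\}^{c}])/\det A$.) For the converse I would run the correspondence the other way: from $JA^{-1}J$ totally positive I get $\det A=1/\det(JA^{-1}J)>0$, and then $\det(A[\beta^{c}\,|\,\alpha^{c}])=\det A\cdot\det((JA^{-1}J)[\alpha\,|\,\beta])\ge 0$ for every $\alpha,\beta$; as $\alpha^{c},\beta^{c}$ sweep out all index sets of a common size, this says every minor of $A$ is nonnegative, i.e. $A$ is totally positive.

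The delicate point — the one that also dictates how ``$JA^{-1}J=|A^{-1}|\ge 0$'' must be understood in the converse — is that entrywise sign data on $A^{-1}$ alone does not pin down the higher-order minors of $A$, nor even the sign of $\det A$; the correspondence genuinely has to be used at every order $1\le k\le n$, so in the converse one really wants $JA^{-1}J$ to be totally positive, not merely nonnegative. The remaining ingredients are routine: establishing Jacobi's identity (for instance by Gaussian elimination on the bordered matrix $\left[\begin{smallmatrix}A & I\\ I & 0\end{smallmatrix}\right]$ and matching the two block-triangular factorizations, i.e. a Schur-complement computation), the parity count for the $J$-factors, and checking that the zeros allowed by the sign-regularity notion in Definition~\ref{deftotal} never obstruct any of the sign comparisons.
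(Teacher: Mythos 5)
The paper offers no proof of this lemma --- it is quoted from Fallat--Johnson \cite{Fallat2011} as a known fact --- so there is nothing internal to compare against; your reconstruction via Jacobi's identity for the minors of the inverse, $\det\bigl((A^{-1})[\alpha\,|\,\beta]\bigr)=(-1)^{s(\alpha)+s(\beta)}\det\bigl(A[\beta^{c}\,|\,\alpha^{c}]\bigr)/\det A$, together with the parity count $\prod_{i\in\alpha}(-1)^{i+1}\prod_{j\in\beta}(-1)^{j+1}=(-1)^{s(\alpha)+s(\beta)}$ for $|\alpha|=|\beta|$, is exactly the standard argument and is correct. Your ``delicate point'' is well taken and is in fact a genuine imprecision in the statement as the paper transcribes it: entrywise checkerboard signs on $A^{-1}$ alone do not imply that $A$ is totally positive (in the paper's sense of all minors nonnegative) --- e.g.\ $A^{-1}=\left(\begin{smallmatrix}1&-2\\-2&1\end{smallmatrix}\right)$ has the right sign pattern while $A=-\tfrac{1}{3}\left(\begin{smallmatrix}1&2\\2&1\end{smallmatrix}\right)$ is entrywise negative --- so the converse really does require $JA^{-1}J$ to be totally nonnegative at every order, as you say. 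It is worth noting that the paper only ever uses the forward implication (in the proof of Theorem 1(iv), to get $JB^{-1}J\ge 0$), for which the $k=1$ case of your correspondence, i.e.\ the adjugate formula, already suffices; so the imprecision in the converse is harmless for the paper's purposes, and your proof covers everything that is actually needed.
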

\begin{lem}[\cite{Minc1988}]\label{lemmmatrix}
  A nonsingular matrix $A$ is an M-matrix if and only if $A^{-1}$ is nonnegative.
\end{lem}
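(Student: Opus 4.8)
The plan is to prove the two implications of the equivalence separately, the forward one being essentially bookkeeping and the reverse one being where the real content sits. For the ``only if'' direction, if $A$ is an M-matrix then $A^{-1}\geq 0$ is already part of the defining requirement in Definition~\ref{deftotal}; equivalently, starting from one of the standard characterizations, namely the representation $A=sI-B$ with $B\geq 0$ and $s>\rho(B)$, one has the convergent Neumann series $A^{-1}=s^{-1}\sum_{j\geq 0}(s^{-1}B)^{j}$, whose partial sums are nonnegative by Lemma~\ref{lemnonnegative}, and hence so is the limit $A^{-1}$.

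For the ``if'' direction I would argue as follows. Assume $A$ is a nonsingular Z-matrix with $A^{-1}\geq 0$; the point to establish is that $A$ is an M-matrix, i.e.\ that in the representation $A=sI-B$ one can arrange $B\geq 0$ together with $s>\rho(B)$. Take $s=\max_i a_{ii}$ (or any larger number) and set $B=sI-A$; since $A$ is a Z-matrix, every off-diagonal entry of $B$ equals $-a_{ij}\geq 0$ and every diagonal entry equals $s-a_{ii}\geq 0$, so $B\geq 0$. By the Perron--Frobenius theorem there is a nonzero vector $w\geq 0$ with $Bw=\rho(B)w$. Then $Aw=(sI-B)w=(s-\rho(B))w$, and since $A$ is nonsingular we must have $s\neq\rho(B)$, whence $A^{-1}w=(s-\rho(B))^{-1}w$. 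The left-hand side is nonnegative because $A^{-1}\geq 0$ and $w\geq 0$; as $w\geq 0$, $w\neq 0$ and $(s-\rho(B))^{-1}$ is a nonzero real scalar, this forces $(s-\rho(B))^{-1}>0$, i.e.\ $s>\rho(B)$. Thus $A$ is a nonsingular M-matrix.

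The main obstacle, and the only step that is not pure bookkeeping, is this Perron--Frobenius argument in the reverse direction: $B$ need not be irreducible, so one obtains only a nonnegative (not strictly positive) eigenvector associated with $\rho(B)$, and one must be careful to extract the \emph{strict} inequality $s>\rho(B)$ from the sign data $A^{-1}w\geq 0$, $w\geq 0$, $w\neq 0$ rather than merely $s\neq\rho(B)$. A secondary point is the definitional mismatch flagged above: under the definition of M-matrix adopted in Definition~\ref{deftotal} the ``only if'' half is immediate, and the ``if'' half is meaningful only under the standing hypothesis that $A$ is a Z-matrix, so I would state that hypothesis explicitly (or invoke the spectral characterization of M-matrices) before running the argument.
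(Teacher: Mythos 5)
The paper offers no proof of this lemma at all: it is imported verbatim from Minc (1988) as a known characterization of M-matrices, so there is no internal argument to compare yours against. Judged on its own, your proof is the standard textbook argument and it is correct. The forward half is either definitional (under the paper's Definition~\ref{deftotal}, $A^{-1}\geq 0$ is part of what ``M-matrix'' means) or, under the spectral characterization $A=sI-B$, $B\geq 0$, $s>\rho(B)$, follows from the Neumann series via Lemma~\ref{lem6} and Lemma~\ref{lemnonnegative}. The reverse half is where the content lies, and your handling is sound: with $s\geq\max_i a_{ii}$ and $B=sI-A\geq 0$, the Perron--Frobenius theorem for general (possibly reducible) nonnegative matrices supplies a nonzero $w\geq 0$ with $Bw=\rho(B)w$; nonsingularity of $A$ rules out $s=\rho(B)$; and since $w$ has at least one strictly positive entry, $A^{-1}w=(s-\rho(B))^{-1}w\geq 0$ forces $(s-\rho(B))^{-1}>0$, hence $s>\rho(B)$. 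You correctly identify the two delicate points: that the eigenvector is only nonnegative rather than positive in the reducible case, and that the lemma as stated in the paper is true only under the standing hypothesis that $A$ is a Z-matrix (otherwise any inverse-nonnegative matrix with a positive off-diagonal entry is a counterexample to the ``if'' direction). Flagging that hypothesis explicitly, as you do, is the right call.
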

\begin{lem}[\cite{Saad2009}]\label{lem1}
  Let $A=M-N$ be a regular splitting of $A=(a_{ij})\in \mathbb{R}^{n\times n}$. Then, $\rho(M^{-1}N)<1$ if and only if $A$ is nonsingular and $A^{-1}$ is nonnegative.
\end{lem}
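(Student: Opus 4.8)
The plan is to prove both implications by working with the iteration matrix $T := M^{-1}N$, exploiting its nonnegativity and Perron--Frobenius theory. First I would record the elementary identities that tie everything together: since $M$ is nonsingular and $A = M - N$, we have $T = M^{-1}N = I - M^{-1}A$, equivalently $A = M(I - T)$, and whenever $I - T$ is invertible also $A^{-1} = (I - T)^{-1}M^{-1}$. Because the splitting is regular, $M^{-1} \geq 0$ and $N \geq 0$, so Lemma~\ref{lemnonnegative} immediately gives $T = M^{-1}N \geq 0$, which is the structural fact the rest of the argument rests on.

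For the ``only if'' direction I would assume $\rho(T) < 1$. Then $1$ is not an eigenvalue of $T$, so $I - T$ is invertible and hence $A = M(I - T)$ is nonsingular; moreover the Neumann series $(I-T)^{-1} = \sum_{k \geq 0} T^{k}$ converges. Iterating Lemma~\ref{lemnonnegative} shows $T^{k} \geq 0$ for every $k$, so each partial sum of the series is nonnegative, and since the set of entrywise-nonnegative matrices is closed the limit satisfies $(I-T)^{-1} \geq 0$; one further application of Lemma~\ref{lemnonnegative} to $A^{-1} = (I-T)^{-1}M^{-1}$ gives $A^{-1} \geq 0$.

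For the ``if'' direction I would assume $A$ nonsingular and $A^{-1} \geq 0$ and derive $\rho(T) < 1$. By the Perron--Frobenius theorem applied to the nonnegative matrix $T$, the number $\rho := \rho(T)$ is an eigenvalue of $T$ with an eigenvector $x \geq 0$, $x \neq 0$. Nonsingularity of $A = M(I - T)$ forces $I - T$ to be nonsingular, so $\rho \neq 1$; then $(I-T)x = (1-\rho)x$ yields $(I-T)^{-1}x = (1-\rho)^{-1}x$, and therefore
\[
  A^{-1}Nx \;=\; (I-T)^{-1}M^{-1}Nx \;=\; (I-T)^{-1}Tx \;=\; \rho\,(I-T)^{-1}x \;=\; \frac{\rho}{1-\rho}\,x .
\]
The left-hand side is nonnegative since $A^{-1} \geq 0$, $N \geq 0$ and $x \geq 0$; reading off a strictly positive component of $x$ and using $\rho \geq 0$ forces $1 - \rho > 0$, i.e. $\rho(T) < 1$.

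The hard part will not be a single deep step but two small places where care is needed: in the ``only if'' direction, justifying that the entrywise limit of the nonnegative Neumann partial sums remains nonnegative (closedness of the nonnegative cone, with convergence of the series guaranteed precisely by $\rho(T) < 1$); and in the ``if'' direction, using Perron--Frobenius to obtain a genuinely nonnegative eigenvector associated with the spectral radius itself --- an arbitrary eigenvalue of modulus $\rho(T)$ would not give a usable sign inequality --- together with the remark that the hypothesis ``$A$ nonsingular'' is exactly what rules out the borderline case $\rho(T) = 1$.
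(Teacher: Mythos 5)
The paper does not prove this lemma at all --- it is quoted verbatim from Saad's book (\cite{Saad2009}) as a known result, so there is no in-paper argument to compare against. Your proof is correct and is essentially the standard textbook argument for regular splittings (as in Varga and Saad): the ``only if'' direction via the Neumann series $(I-T)^{-1}=\sum_{k\ge 0}T^k\ge 0$ applied to $A^{-1}=(I-T)^{-1}M^{-1}$, and the ``if'' direction via Perron--Frobenius applied to the nonnegative iteration matrix $T$, using the identity $A^{-1}Nx=\frac{\rho}{1-\rho}x$ to force $\rho<1$. Both directions are complete, and you correctly flag the two delicate points (closedness of the nonnegative cone under the series limit, and the need for a nonnegative eigenvector attached to $\rho(T)$ itself rather than an arbitrary peripheral eigenvalue).
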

\begin{lem}[\cite{Horn1985}]\label{lem6}
  If $A$ is a matrix with $\rho(A)<1$, then $I-A$ is nonsingular, and
  $$(I-A)^{-1}= 1+A+A^2+\cdots.$$
\end{lem}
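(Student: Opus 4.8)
The plan is to prove the two assertions in turn: first that $I-A$ is invertible, and then that the partial sums of the Neumann series converge to $(I-A)^{-1}$. I would dispatch the invertibility through the spectrum of $A$. Since $\rho(A)<1$, every eigenvalue $\lambda$ of $A$ satisfies $|\lambda|\le\rho(A)<1$, so $1$ is not an eigenvalue of $A$; equivalently $\det(I-A)=\prod_{i}(1-\lambda_i)\neq 0$, which is precisely the nonsingularity of $I-A$.

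Next, for the series, the workhorse is the telescoping identity
$$(I-A)\sum_{j=0}^{m}A^{j}=I-A^{m+1},\qquad m\ge 0,$$
from which, after left-multiplying by $(I-A)^{-1}$, one obtains $\sum_{j=0}^{m}A^{j}=(I-A)^{-1}(I-A^{m+1})$. Hence the convergence of the partial sums reduces to showing $A^{m+1}\to 0$ as $m\to\infty$, and in that case $\sum_{j=0}^{m}A^{j}\to(I-A)^{-1}$, as claimed.

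The only step carrying real content — and where the hypothesis $\rho(A)<1$ is actually used — is the fact that $\rho(A)<1$ implies $A^{m}\to 0$. I expect this to be the main (and essentially the sole) obstacle, though it is classical. I would establish it by choosing $\varepsilon>0$ with $\rho(A)+\varepsilon<1$ and invoking the existence of a vector norm whose induced matrix norm satisfies $\|A\|\le\rho(A)+\varepsilon<1$; then $\|A^{m}\|\le\|A\|^{m}\to 0$, and since all norms on the finite-dimensional space of $n\times n$ matrices are equivalent, $A^{m}\to 0$ entrywise. An alternative is to pass to the Jordan form $A=PJP^{-1}$, so that $A^{m}=PJ^{m}P^{-1}$ and each Jordan block with eigenvalue of modulus below $1$ has powers with entries $\binom{m}{\ell}\lambda^{m-\ell}\to 0$. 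Either way, combining $A^{m+1}\to 0$ with the telescoping identity and the already-established invertibility of $I-A$ finishes the argument.
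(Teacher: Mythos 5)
Your argument is correct and complete: the eigenvalue argument for nonsingularity, the telescoping identity $(I-A)\sum_{j=0}^{m}A^{j}=I-A^{m+1}$, and the reduction to $A^{m}\to 0$ (justified either by a norm with $\|A\|\le\rho(A)+\varepsilon<1$ or by the Jordan form) constitute the standard textbook proof of this Neumann-series lemma. The paper itself offers no proof --- it simply cites Horn and Johnson --- so there is nothing to compare against; your write-up supplies exactly the classical argument that the cited reference contains. (One cosmetic note: the series in the paper's statement should begin with $I$ rather than the scalar $1$, a typo your proof implicitly corrects.)
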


\begin{theorem}\label{thelem}
  Let $B$ be the collocation matrix defined as in \eqref{equ:collm}, and $Q$ be the preconditioner defined as in \eqref{equ:precon}. Then,
  \begin{enumerate}
    \item[\emph{(i)}] ${D}_{QB}$ is a diagonal matrix with positive diagonals.
    \item[\emph{(ii)}] $\langle QB\rangle = J QB J$, where $J = \emph{diag}\left(1,-1,1,\cdots, (-1)^{(n+1)}\right)$.
    \item[\emph{(iii)}] The preconditioner $Q$ is invertible and $Q^{-1}$ is a sign-regular matrix.
    \item[\emph{(iv)}] $\left(\left\langle QB\right\rangle\right)^{-1}$ is nonnegative and $\left\langle QB\right\rangle$ is a M-matrix.
  \end{enumerate}
\end{theorem}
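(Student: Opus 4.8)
The plan is to read everything off the explicit form of $QB$. The excerpt already records $QB=(I+S)(D_B-L_B-U_B)$ with $D_{QB}=D_B-SL_B$, $L_{QB}=L_B$, $U_{QB}=U_B-SD_B+SU_B$; since $S$ is minus the strict upper part of $B$, a single line of multiplication shows $QB$ has nonzero entries only on the sub‑diagonal, the diagonal, the first and the second super‑diagonals, and gives each of them in closed form: for interior indices $(QB)_{i,i-1}=N_{i-3}^3(t_i)$, $(QB)_{ii}=N_{i-2}^3(t_i)-N_{i-1}^3(t_i)N_{i-2}^3(t_{i+1})$ (with boundary values $1$, $1$, $N_{n-3}^3(t_{n-1})$ at $i=1,n,n-1$), $(QB)_{i,i+1}=N_{i-1}^3(t_i)\bigl(1-N_{i-1}^3(t_{i+1})\bigr)$, and $(QB)_{i,i+2}=-N_{i-1}^3(t_i)N_i^3(t_{i+1})$. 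I would compute these four families first, because (i)--(iv) then become assertions about their signs.

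For (i), the only non‑immediate point is $(QB)_{ii}>0$ for $2\le i\le n-2$. I would evaluate $N_{i-1}^3(t_i)$, $N_{i-2}^3(t_i)$, $N_{i-2}^3(t_{i+1})$ by the de~Boor recurrence at the knots, expressing them as rational functions of the four consecutive knot gaps around $t_i$; then the second de~Boor term of $N_{i-2}^3(t_i)$ alone already exceeds $N_{i-1}^3(t_i)N_{i-2}^3(t_{i+1})$, the residual inequality collapsing to $(c+d)^2(b+c+d)>bd^2$, which is trivial. Given (i), part (ii) is bookkeeping: comparing $\langle QB\rangle$ with $JQBJ$ entry by entry, the diagonal is positive by (i), the sub‑ and super‑diagonal entries are $\ge0$ (using only $0\le N_{i-1}^3(t_{i+1})\le1$, i.e. nonnegativity and partition of unity of the basis), the second super‑diagonal entries are $\le0$, and all other entries vanish, so multiplying $(QB)_{ij}$ by $(-1)^{i+j}$ yields exactly $|a_{ii}|$ on and $-|a_{ij}|$ off the diagonal. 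Part (iii) is short: $Q=I+S$ is unit upper triangular, hence invertible with $\det Q=1$, and since $S$ is nilpotent, $Q^{-1}=\sum_{k\ge0}(-S)^k$ is a finite sum; as $-S\ge0$, Lemma \ref{lemnonnegative} gives $Q^{-1}\ge0$ and the bidiagonal/checkerboard pattern of $S$ pins down its sign structure, with the explicit entries $(Q^{-1})_{kj}=\prod_{l=k}^{j-1}r_l$, $r_l:=N_{l-1}^3(t_l)\ge0$, which part (iv) will use.

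The substantive step is (iv). I would combine three things: $\langle QB\rangle=JQBJ$ from (ii); the total positivity of the collocation matrix $B$, which by Lemma \ref{lemtotal} makes $JB^{-1}J=|B^{-1}|\ge0$ (equivalently $\langle B\rangle^{-1}=|B^{-1}|$, since $\langle B\rangle=JBJ$ for tridiagonal $B$ with nonnegative entries); and the product form of $Q^{-1}$ from (iii). Then $\langle QB\rangle^{-1}=JB^{-1}Q^{-1}J$, and peeling off signs turns its $(i,j)$ entry into the alternating sum $\sum_{k\le j}(-1)^{j-k}|B^{-1}|_{ik}\prod_{l=k}^{j-1}r_l$. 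I would show this is $\ge0$ by establishing the monotonicity $|B^{-1}|_{i,m}\ge r_{m-1}|B^{-1}|_{i,m-1}$ for all $i,m$: expanding $(|B^{-1}|\langle B\rangle)_{im}=\delta_{im}$ with $\langle B\rangle$ tridiagonal gives $B_{mm}|B^{-1}|_{im}\ge B_{m-1,m}|B^{-1}|_{i,m-1}$, and $B_{m-1,m}/B_{mm}=N_{m-2}^3(t_{m-1})/N_{m-2}^3(t_m)\ge N_{m-2}^3(t_{m-1})=r_{m-1}$ because $N_{m-2}^3(t_m)\le1$; an alternating sum of a nondecreasing nonnegative sequence whose largest term carries the $+$ sign is $\ge0$, so $\langle QB\rangle^{-1}\ge0$, and being also a nonsingular Z‑matrix (a comparison matrix always is) it is an M‑matrix by Lemma \ref{lemmmatrix}. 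The hard part is precisely this: crude estimates such as diagonal dominance of $QB$ fail when a knot span is tiny, and one must exploit the total positivity of $B$ — through the sign pattern of $B^{-1}$ and the tridiagonal recursion for its columns — to see that the comparison matrix of $QB$ still has a nonnegative inverse.
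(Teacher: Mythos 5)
Your overall strategy is sound and, for the decisive part (iv), genuinely different from the paper's --- and your detour is not optional. The paper proves (iv) by writing $\langle QB\rangle^{-1}=(JB^{-1}J)(JQ^{-1}J)$ and claiming both factors are nonnegative, the second because $Q^{-1}$ is ``sign-regular'' (checkerboard) by its part (iii). But your Neumann-series computation is the correct one: since $S\le 0$ is nilpotent, $Q^{-1}=(I+S)^{-1}=\sum_{k\ge0}(-S)^k\ge 0$, with $(Q^{-1})_{kj}=\prod_{l=k}^{j-1}N_{l-1}^3(t_l)\ge 0$ for $k<j$; the paper's formula $x_{ij}=(-1)^{i-j}\prod_{l=i}^{j-1}N_{l-1}^3(t_l)$ carries a spurious sign (check $x_{j-1,j}=N_{j-2}^3(t_{j-1})>0$ directly from $Q\boldsymbol{x}_j=\boldsymbol{e}_j$). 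Hence $Q^{-1}$ is not sign-regular in the sense of Definition~\ref{deftotal}, $JQ^{-1}J$ has negative off-diagonal entries, and the paper's product argument for (iv) does not close. Your replacement --- expanding $(JB^{-1}Q^{-1}J)_{ij}$ as the alternating sum $\sum_{k\le j}(-1)^{j-k}|B^{-1}|_{ik}\prod_{l=k}^{j-1}r_l$ and proving the monotonicity $|B^{-1}|_{im}\ge r_{m-1}|B^{-1}|_{i,m-1}$ from the tridiagonal identity $|B^{-1}|\langle B\rangle=I$ together with $B_{mm}\le 1$ --- is correct as far as I can check and is what actually rescues the theorem; it does need $B$ nonsingular and $JB^{-1}J=|B^{-1}|\ge0$, i.e.\ Lemma~\ref{lemtotal} applied to the totally positive collocation matrix, which you use.

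Two smaller points. First, for (i) you propose to grind through the de~Boor recurrence and reduce $(QB)_{ii}>0$ to an inequality in the knot gaps; this is left as an unverified claim, whereas the paper gets it in two lines from the nonnegativity of the $2\times2$ minor $N_{i-2}^3(t_i)N_{i-1}^3(t_{i+1})-N_{i-1}^3(t_i)N_{i-2}^3(t_{i+1})\ge0$ of the totally positive $B$ together with $N_{i-1}^3(t_{i+1})<1$; since you invoke total positivity of $B$ anyway in (iv), you should use it here too rather than rely on an uncarried-out computation. Second, be explicit that what you actually prove in (iii) is $\det Q=1$ and $Q^{-1}\ge0$ with the stated product entries: this establishes invertibility but contradicts, rather than proves, the literal claim that $Q^{-1}$ is sign-regular, so that clause of the statement (and the paper's later use of it) needs to be corrected; your argument for (iv) is, correctly, written so as not to depend on it.
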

\begin{proof}
  Denote by $(QB)_{ij}$ the $(i,j)$-th entry of the matrix $QB$, it follows from \eqref{equ:collm} and \eqref{equ:precon} that
\begin{equation}\label{equ:collm1}
 \left\{ \begin{array}{ll}
(QB)_{11}=(QB)_{nn}=1, (QB)_{n-1,n-1}=N_{n-3}^{3}(t_{n-1}),\\
(QB)_{i,i-1}=N_{i-3}^{3}(t_i), & i = 2,\ldots,n-1;\\
(QB)_{ii}=N_{i-2}^{3}(t_i)-N_{i-1}^{3}(t_i)N_{i-2}^{3}(t_{i+1}), & i = 2,\ldots,n-2;\\
(QB)_{i,i+1}=N_{i-1}^{3}(t_i)-N_{i-1}^{3}(t_{i+1})N_{i-1}^{3}(t_{i}), & i = 2,\ldots,n-2;\\
(QB)_{i,i+2}=-N_{i-1}^{3}(t_{i})N_{i}^{3}(t_{i+1}), & i = 2,\ldots,n-2;\\
(QB)_{ij} = 0,& \textrm{others}.
 \end{array} \right.
\end{equation}

  To prove (i), note in \cite{Lin2004} that the collocation matrix $B$ defined as in \eqref{equ:collm} is a stochastic and totally positive matrix.
  It follows from \eqref{equ:collm} that $0< N_{i-3}^{3}(t_i), N_{i-2}^{3}(t_i), N_{i-1}^{3}(t_i)<1$, and $N_{i-3}^{3}(t_i)+ N_{i-2}^{3}(t_i)+N_{i-1}^{3}(t_i)=1$ for $i = 2,\ldots, n-1$. From the definition of totally positive, all the minors of $B$, of order $2$, are nonnegative, hence
\begin{displaymath}
  \left|
    \begin{array}{cc}
      N_{i-2}^{3}(t_i) & N_{i-1}^{3}(t_i) \\
      N_{i-2}^{3}(t_{i+1}) & N_{i-1}^{3}(t_{i+1}) \\
    \end{array}
  \right|=
  N_{i-2}^{3}(t_i)N_{i-1}^{3}(t_{i+1})-N_{i-1}^{3}(t_i)N_{i-2}^{3}(t_{i+1})\geq 0.
\end{displaymath}
Then, the diagonal entries
\begin{displaymath}
 (QB)_{ii}=\left\{ \begin{array}{ll}
1>0,&i=1,n;\\
N_{i-2}^{3}(t_i)-N_{i-1}^{3}(t_i)N_{i-2}^{3}(t_{i+1})>0,&i = 2,\ldots,n-2;\\
N_{n-3}^{3}(t_{n-1})>0, & i = n-1.\\
 \end{array} \right.
\end{displaymath}
Thus ${D}_{QB}$ is a diagonal matrix with positive diagonals.

 To prove (ii) we observe from \eqref{equ:collm1} that for $|i-j|=1$, $(QB)_{ij}>0$, and for $j-i=2$, $(QB)_{ij}<0$. We can obtain the conclusion by direct calculation.

To prove (iii), according to \eqref{equ:precon}, the determinant of $Q$ is $1$, hence $Q$ is invertible. Let $X =(x_{ij})$ be the inverse of $Q$, and let $\boldsymbol{x}_{j}=({x}_{1j},{x}_{2j},\ldots, {x}_{nj})^{T}$ be the $j$-th column of $X$. Then $Q\boldsymbol{x}_{j}= \boldsymbol{e}_{j}$, where $\boldsymbol{e}_{j}$ is an unit vector with the $j$-th entry equals to 1. Since $Q$ is a banded upper triangular matrix, the $\boldsymbol{x}_{j}$ can be obtained by solving $Q\boldsymbol{x}_{j}= \boldsymbol{e}_{j}$, that is,
\begin{displaymath}
 {x}_{ij} =\left\{ \begin{array}{ll}
 0,& i>j;\\
 1,& i=j;\\
  (-1)^{i-j}\prod\limits_{l=i}^{j-1}N^3_{l-1} (t_l),   & i<j,
 \end{array} \right.
\end{displaymath}
where $N^3_{0} (t_1)=0$. Hence $X$ is a is a sign-regular upper triangular matrix, thus the conclusion (iii) holds.

To prove (iv), according to Lemma \ref{lemmmatrix}, we need to show that $\left(\langle QB\rangle\right)^{-1}$ is nonnegative. Notice that $J^{-1} = J$. From (ii), we have $\left(\langle QB\rangle\right)^{-1} = (J QB J)^{-1} = J B^{-1}Q^{-1} J = J B^{-1}JJQ^{-1} J$. Since the collocation matrix $B$ defined as in \eqref{equ:collm} is totally positive. It follows from Lemma \ref{lemtotal} that $JB^{-1}J\geq 0$. From (iii), since $Q^{-1}$ is a sign-regular matrix, $JQ^{-1}J$ is nonnegative. Therefore, it follows from Lemma \ref{lemnonnegative} that $\left(\langle QB\rangle\right)^{-1} = J B^{-1}JJQ^{-1} J$ is nonnegative. Thus the conclusion (iv) holds.
\end{proof}

\begin{theorem}\label{the:pia}
  The preconditioned PIA \eqref{equ:PPIA} is convergent.
\end{theorem}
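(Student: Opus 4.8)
The iteration matrix of the preconditioned PIA \eqref{equ:PPIA} is $I - QB$, so the claim is equivalent to showing $\rho(I-QB) < 1$. The plan is to exhibit $B = M - N$ with $M = Q^{-1}$ and $N = Q^{-1} - B$ as a \emph{regular splitting} (after a sign-similarity transformation), and then invoke Lemma \ref{lem1} together with the nonnegativity of $B^{-1}$ that follows from $B$ being totally positive (Lemma \ref{lemtotal}). Since total positivity gives only $JB^{-1}J \ge 0$ rather than $B^{-1} \ge 0$, the whole argument is carried out in the sign-transformed coordinates: conjugate everything by $J = \mathrm{diag}(1,-1,1,\ldots,(-1)^{n+1})$ and work with $\langle QB\rangle = JQBJ$ (Theorem \ref{thelem}(ii)).

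First I would write $\langle QB\rangle = \widehat M - \widehat N$ with $\widehat M = J Q^{-1} J$ and $\widehat N = J(Q^{-1} - B)J = \widehat M - \langle QB\rangle$. By Theorem \ref{thelem}(iii), $Q^{-1}$ is sign-regular, so by Lemma \ref{lemsigregular} $\widehat M = JQ^{-1}J = |Q^{-1}| \ge 0$; also $\widehat M$ is invertible with inverse $JQJ$, which is nonnegative because $Q = I+S$ has the explicit bidiagonal form with $S$ carrying the negative superdiagonal entries, so $JQJ$ has all entries $\ge 0$. Thus $\widehat M^{-1} = JQJ \ge 0$. Next I must check $\widehat N \ge 0$: from the explicit entries \eqref{equ:collm1} of $QB$, the matrix $QB$ is sign-regular (diagonal and first sub/superdiagonals nonnegative, the second superdiagonal $\le 0$, everything else zero), hence $\langle QB\rangle = JQBJ = |QB| \ge 0$ has exactly the same magnitudes; and $\widehat M = |Q^{-1}|$ is upper triangular and nonnegative. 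One then verifies entrywise that $\widehat N = \widehat M - \langle QB \rangle \ge 0$ — this is where the structure must be used carefully, and it is the main obstacle: unlike $D_{QB}-L_{QB}-U_{QB}$, the splitting $Q^{-1}-B$ is not obviously "diagonally dominant off $B$," so one needs the explicit column formula for $Q^{-1} = X$ from the proof of Theorem \ref{thelem}(iii) and the fact that the B-spline values lie in $(0,1)$ and sum to $1$ to show each entry of $\widehat M$ dominates the corresponding entry of $\langle QB\rangle$.

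Once the regular splitting $\langle QB\rangle = \widehat M - \widehat N$ is established, the finish is immediate: by Theorem \ref{thelem}(iv), $\langle QB\rangle$ is an M-matrix, so $\langle QB\rangle^{-1} \ge 0$ and $\langle QB\rangle$ is nonsingular; Lemma \ref{lem1} then yields $\rho(\widehat M^{-1}\widehat N) < 1$. Finally, since $I - QB = J(I - \langle QB\rangle)J = J(\widehat M^{-1}\widehat N$-similar matrix$)J$ — more precisely $\widehat M^{-1}\widehat N = JQJ\cdot J(Q^{-1}-B)J = JQ(Q^{-1}-B)J = J(I-QB)J$ — the matrices $I-QB$ and $\widehat M^{-1}\widehat N$ are similar, hence $\rho(I-QB) = \rho(\widehat M^{-1}\widehat N) < 1$, and the preconditioned PIA converges. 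An alternative, possibly shorter, route avoiding the entrywise check is to write $I - QB = Q(Q^{-1}-B)$ and bound $\rho$ directly, but I expect the regular-splitting argument above to be the cleanest given the lemmas already assembled in the paper.
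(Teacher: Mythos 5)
Your overall strategy---pass to the comparison matrix via the sign matrix $J$, exhibit a regular splitting, invoke Lemma \ref{lem1}, and transfer the spectral radius back by similarity---is exactly the strategy of the paper's proof. However, the specific splitting you set up contains an inconsistency that breaks the argument as written. You define $\widehat N$ by the chain $\widehat N = J(Q^{-1}-B)J = \widehat M - \langle QB\rangle$, and these two expressions are not equal: $\widehat M - \langle QB\rangle = J(Q^{-1}-QB)J$, which differs from $J(Q^{-1}-B)J$ by $J(Q-I)BJ = JSBJ \neq 0$. Under the reading $\widehat N := \widehat M - \langle QB\rangle$, the splitting really is a splitting of $\langle QB\rangle$ and Theorem \ref{thelem}(iv) is the right tool, but the entrywise check you flag as ``the main obstacle'' in fact \emph{fails}: $Q^{-1}$ is upper triangular while $QB$ has the strictly positive subdiagonal $(QB)_{i,i-1}=N^{3}_{i-3}(t_i)$, so $\left(\widehat M - \langle QB\rangle\right)_{i,i-1} = -N^{3}_{i-3}(t_i) < 0$ and the splitting is not regular. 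Under the reading $\widehat N := J(Q^{-1}-B)J$ (the one consistent with your computation $\widehat M^{-1}\widehat N = J(I-QB)J$), you have $\widehat M - \widehat N = JBJ$, not $\langle QB\rangle$, so the matrix whose inverse-positivity Lemma \ref{lem1} requires is $JBJ$; the relevant fact is $JB^{-1}J\ge 0$ from total positivity of $B$ (Lemma \ref{lemtotal}), and your appeal to $\langle QB\rangle^{-1}\ge 0$ from Theorem \ref{thelem}(iv) is aimed at the wrong matrix.

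The second reading is repairable into a correct (and genuinely different) proof: one verifies directly that $J(Q^{-1}-B)J\ge 0$ (its diagonal entries are $1-N^{3}_{i-2}(t_i)\ge 0$, its sub- and superdiagonal entries become $N^{3}_{i-3}(t_i)$ and $2N^{3}_{i-1}(t_i)$ after conjugation, and the remaining entries of $Q^{-1}$ satisfy $JQ^{-1}J=|Q^{-1}|\ge 0$ by sign-regularity), that $\widehat M^{-1}=JQJ\ge 0$, and that $JB^{-1}J\ge 0$; Lemma \ref{lem1} then gives $\rho\left(J(I-QB)J\right)<1$ and hence $\rho(I-QB)<1$. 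This variant bypasses Theorem \ref{thelem}(iv) entirely but costs an entrywise verification. The paper avoids all of this by taking the Richardson splitting of $\langle QB\rangle$ itself, $M=I$ and $N=I-\langle QB\rangle$, whose regularity is immediate from the sign pattern of $QB$ (the off-diagonal entries of $\langle QB\rangle$ are $\le 0$ and its diagonal entries lie in $(0,1]$), and then concludes via Theorem \ref{thelem}(iv) and Lemma \ref{lem1}. As submitted, your proposal both leaves the key nonnegativity check unperformed and, under the reading in which that check would succeed, applies Lemma \ref{lem1} to the wrong matrix; this is a genuine gap, though one that is fixable along the lines above.
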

\begin{proof}
  To prove the conclusion, we need to show the spectral radius of the preconditioned PIA is less than $1$, i.e., $\rho\left(I-QB\right)< 1$. Consider the Richardson splitting of the comparison matrix of $ QB$, that is, $\langle QB\rangle = M_{\langle QB\rangle}-N_{\langle QB\rangle},$
  where $M_{\langle QB\rangle} = I$, $N_{\langle QB\rangle} = I-\langle QB\rangle$.
  Clearly, both $M_{\langle QB\rangle}^{-1}$ and $N_{\langle QB\rangle}$ are nonnegative.
  Thus the Richardson splitting of $\langle QB\rangle$ is a regular one. Therefore, it follows from Theorem \ref{thelem}(iv) and Lemma \ref{lem1} that $\rho\left(M_{\langle QB\rangle}^{-1}N_{\langle QB\rangle}\right)=\rho\left(I-\langle QB\rangle\right)< 1$.
   One the other hand, the Richardson splitting of $QB$ is $ QB = M_{ QB}-N_{ QB},$ where $M_{ QB} = I$, $N_{ QB} = I- QB$. From Theorem \ref{thelem}(ii), since $I-\langle QB\rangle = J(I- QB)J $, we have that $I-\langle QB\rangle$ is similar to $I- QB$, and hence $\rho\left(I- QB\right)=\rho\left(I-\langle QB\rangle\right)< 1$. This completes the proof.
\end{proof}

\begin{theorem}\label{the:PWPIA}
  When $\omega =2\bigg/\left(\min\limits_{i=1}^{n}|\lambda_i(QB)|+\max\limits_{i=1}^{n}|\lambda_i(QB)|\right)$, the preconditioned WPIA \eqref{equ:PWPIA} converges and has the fastest convergence, where $\lambda_i(QB), i=1,2,\ldots,n$ are the eigenvalues of $QB$.
\end{theorem}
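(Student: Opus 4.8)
The plan is to reduce the statement to the classical optimal-parameter analysis for the modified Richardson (Richardson with relaxation) iteration applied to a matrix whose spectrum is known to be real and positive. The iteration matrix of the preconditioned WPIA \eqref{equ:PWPIA} is $I-\omega QB$, so convergence and its optimal rate are governed by $\rho(I-\omega QB)=\max_i|1-\omega\lambda_i(QB)|$. The first thing I would establish is that the eigenvalues $\lambda_i(QB)$ are real and strictly positive. This follows from Theorem \ref{thelem}: by part (ii), $QB$ is similar (via $J$) to its comparison matrix $\langle QB\rangle$, so $QB$ and $\langle QB\rangle$ have the same spectrum; by part (iv), $\langle QB\rangle$ is an M-matrix, hence all its eigenvalues have positive real part, and in fact — since $\langle QB\rangle$ is a nonsingular M-matrix — we may write $\langle QB\rangle = sI - C$ with $C\geq 0$ and $s>\rho(C)$, so every eigenvalue $\lambda$ of $QB$ satisfies $\mathrm{Re}(\lambda) = s-\mathrm{Re}(\mu)>0$ for the corresponding eigenvalue $\mu$ of $C$. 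If the problem intends the eigenvalues to be genuinely real (which the $\min/\max$ over $|\lambda_i|$ in the statement tacitly assumes so that the optimum is the usual one), I would note that $QB$ is similar to $D_{QB}^{-1/2}(\langle QB\rangle)\,D_{QB}^{1/2}$ only gives symmetry if $\langle QB\rangle$ were symmetric, which it is not; so the cleanest route is simply to work with $|\lambda_i|$ and the enclosing disk/interval $[\min_i|\lambda_i|,\max_i|\lambda_i|]$ as the statement itself does.

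Next I would set $\alpha = \min_i|\lambda_i(QB)|$ and $\beta = \max_i|\lambda_i(QB)|$, both positive by the previous step, and carry out the standard minimax argument: for real positive spectrum contained in $[\alpha,\beta]$, the function $\omega\mapsto\max_{\lambda\in[\alpha,\beta]}|1-\omega\lambda|$ is minimized by equioscillation, i.e.\ when $1-\omega\alpha = -(1-\omega\beta)$, giving $\omega^\* = 2/(\alpha+\beta)$, which is exactly the claimed value. At this $\omega^\*$ one computes $\rho(I-\omega^\* QB)\leq (\beta-\alpha)/(\beta+\alpha)<1$, establishing both convergence and optimality of the rate among all relaxation parameters. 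I would present this as a short lemma-style computation: $|1-\omega\lambda|$ is a convex piecewise-linear function of $\omega$ for each fixed $\lambda$, the pointwise maximum over $\lambda\in[\alpha,\beta]$ is attained at the endpoints, and balancing the two endpoint values yields the unique minimizer.

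The main obstacle is the spectral-reality issue: the formula $2/(\alpha+\beta)$ is the true optimizer only when the eigenvalues of $QB$ lie on the real axis (or at least when the smallest enclosing disk of the spectrum is centered on the real positive axis). Theorem \ref{thelem}(iv) guarantees $\langle QB\rangle$ is an M-matrix, which pins down positivity of the real parts but not reality per se; however, $QB$ as written in \eqref{equ:collm1} is a banded matrix that is tridiagonal-plus-one-superdiagonal, and one can argue reality by exhibiting a diagonal similarity that symmetrizes it — concretely, since $(QB)_{i,i-1}>0$ for all relevant $i$ and the matrix is "almost tridiagonal," I would try to show the relevant part is diagonally similar to a symmetric matrix, handling the extra $(QB)_{i,i+2}$ entries as a controlled perturbation, or alternatively invoke that $B$ is totally positive (hence has real positive eigenvalues) and argue the preconditioning by the unipotent upper-triangular $Q$ does not destroy reality. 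I would flag that if strict reality cannot be shown, the theorem should be read with $|\lambda_i|$ in place of $\lambda_i$ (as it is literally written), in which case the enclosing-interval minimax argument goes through verbatim and the stated $\omega$ is optimal for the bound $\rho\leq(\beta-\alpha)/(\beta+\alpha)$; the convergence claim $\rho(I-\omega QB)<1$ then needs only $\beta < 2/\omega \cdot$ something, which holds for the chosen $\omega$ since $\omega\beta = 2\beta/(\alpha+\beta)<2$ and $\omega\alpha>0$.
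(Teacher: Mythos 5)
Your proposal is correct and follows essentially the same route as the paper: the paper's proof is exactly the equioscillation/minimax computation $\omega_{\mathrm{opt}}=\arg\min_{\omega}\max\bigl\{|1-\omega\min_i|\lambda_i(QB)||,\,|1-\omega\max_i|\lambda_i(QB)||\bigr\}$, balanced at $1-\omega\alpha=-(1-\omega\beta)$ to give $\omega=2/(\alpha+\beta)$. Your additional discussion of the spectral-reality issue identifies a point the paper silently assumes (it passes from $\max_i|1-\omega\lambda_i|$ to the two-endpoint form without justifying that the spectrum is real and positive), so your flagging of that gap is a genuine improvement in rigor rather than a deviation in method.
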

\begin{proof}
  We want to find the optimal $\omega$ such that
  \begin{displaymath}
  \begin{split}
    \omega_{\textrm{opt}} &= \arg\min_{\omega \in \mathbb{C}}\rho(I-\omega QB)\\
                        &= \arg\min_{\omega \in \mathbb{C}}\max_{i=1}^{n} |1-\omega \lambda_i(QB)|\\
                        &= \arg\min_{\omega \in \mathbb{C}}\max \left\{\left|1-\omega\min_{i=1}^{n}|\lambda_i(QB)|\right|,  \left|1-\omega\max_{i=1}^{n}|\lambda_i(QB)|\right|\right\},
    \end{split}
  \end{displaymath}
   Therefore, the optimal relaxation factor $\omega$ arrives when $1-\omega\min\limits_{i=1}^{n}|\lambda_i(QB)|=-1+\omega\max\limits_{i=1}^{n}|\lambda_i(QB)|$. It yields the conclusion.
\end{proof}

\begin{theorem}\label{the:jacobi}
  The preconditioned Jacobi--PIA \eqref{equ:PjPIA} is convergent.
\end{theorem}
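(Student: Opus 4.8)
The plan is to follow the template of the proof of Theorem~\ref{the:pia} for the preconditioned PIA, simply replacing the Richardson splitting by the Jacobi splitting and passing through the comparison matrix $\langle QB\rangle$. First I would pin down the iteration matrix of \eqref{equ:PjPIA}: from the form $\boldsymbol{p}^{(k+1)} = \boldsymbol{p}^{(k)} + {D}_{QB}^{-1}Q(\boldsymbol{p} - B\boldsymbol{p}^{(k)})$ it is $G := I - {D}_{QB}^{-1}QB = {D}_{QB}^{-1}(L_{QB}+U_{QB})$, so the theorem reduces to showing $\rho(G) < 1$. By Theorem~\ref{thelem}(i), ${D}_{QB}$ is diagonal with positive entries, hence nonsingular with ${D}_{QB}^{-1}\geq 0$; in particular $G$ is a genuine Jacobi iteration matrix, and since $L_{QB}+U_{QB}$ is minus the off-diagonal part of $QB$, we have $G = -{D}_{QB}^{-1}F$, where $F$ denotes the off-diagonal part of $QB$.

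Next I would bound the spectral radius of the Jacobi iteration matrix of the comparison matrix $\langle QB\rangle$. Because ${D}_{QB}$ has positive diagonal, the diagonal part of $\langle QB\rangle$ is again ${D}_{QB}$, and its off-diagonal part is $-|F|$ (the entrywise absolute value of $F$); thus $\langle QB\rangle = {D}_{QB} - |F|$ is precisely the Jacobi splitting of $\langle QB\rangle$, and it is a regular splitting since ${D}_{QB}^{-1}\geq 0$ and $|F|\geq 0$. By Theorem~\ref{thelem}(iv), $\langle QB\rangle$ is an M-matrix, hence nonsingular with $\langle QB\rangle^{-1}\geq 0$, so Lemma~\ref{lem1} gives $\rho\!\left({D}_{QB}^{-1}|F|\right) < 1$.

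Finally I would transfer this estimate to $G$ via Theorem~\ref{thelem}(ii): $\langle QB\rangle = JQBJ$ with $J = \operatorname{diag}\left(1,-1,\dots,(-1)^{n+1}\right)$ and $J^{-1}=J$. Comparing diagonal and off-diagonal parts in ${D}_{QB} - |F| = JQBJ = {D}_{QB} + JFJ$ gives $JFJ = -|F|$; since the diagonal matrix ${D}_{QB}^{-1}$ commutes with $J$, it follows that ${D}_{QB}^{-1}|F| = -{D}_{QB}^{-1}JFJ = J\left(-{D}_{QB}^{-1}F\right)J = JGJ$. Hence $G$ is similar to the Jacobi iteration matrix of $\langle QB\rangle$, so $\rho(G) = \rho\!\left({D}_{QB}^{-1}|F|\right) < 1$, which establishes the convergence. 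The only delicate point I anticipate is the sign bookkeeping in the second and third steps — checking that passing to $\langle QB\rangle$ indeed yields the Jacobi splitting with diagonal ${D}_{QB}$ (this is exactly what Theorem~\ref{thelem}(i) buys us) and that the similarity $\langle QB\rangle = JQBJ$ of part (ii) survives division by ${D}_{QB}$; everything else is a direct invocation of Lemma~\ref{lem1} together with the M-matrix property already proved in Theorem~\ref{thelem}(iv).
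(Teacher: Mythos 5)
Your proposal is correct and follows essentially the same route as the paper: both form the Jacobi splitting of the comparison matrix $\langle QB\rangle$, use Theorem~\ref{thelem}(i) to see that this splitting is regular, invoke Theorem~\ref{thelem}(iv) together with Lemma~\ref{lem1} to conclude the comparison iteration matrix has spectral radius less than $1$, and then transfer the bound to the actual iteration matrix by the similarity $JQBJ=\langle QB\rangle$ from Theorem~\ref{thelem}(ii). The only difference is cosmetic — you work with the off-diagonal part $F$ abstractly, whereas the paper writes out the explicit expressions $N_{\langle QB\rangle}=L_B+U_B-SD_B+SU_B$ — and your sign bookkeeping ($JFJ=-|F|$, $D_{QB}^{-1}$ commuting with $J$) checks out.
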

\begin{proof}
  Consider the Jacobi splitting of the comparison matrix of $QB$, i.e., $\langle QB\rangle = M_{\langle QB\rangle}-N_{\langle QB\rangle},$
  where $M_{\langle QB\rangle} = {D}_{QB}=D_{B}-SL_{B}$, $N_{\langle QB\rangle} = L_{B} + U_{B} - SD_{B} + SU_{B}$.
  From Definition \ref{defcom} and Theorem \ref{thelem}(i), it is easy to verify that $M_{\langle QB\rangle}^{-1}$ and $N_{\langle QB\rangle}$ are nonnegative.
  Thus the splitting $\langle QB\rangle = M_{\langle QB\rangle}-N_{\langle QB\rangle}$ is a regular one. Therefore, it follows from Theorem \ref{thelem}(iv) and Lemma \ref{lem1} that $\rho\left(M_{\langle QB\rangle}^{-1}N_{\langle QB\rangle}\right)< 1$.
   One the other hand, the Jacobi splitting of the $QB$ is $ QB = M_{ QB}-N_{ QB},$ where $M_{ QB} = {D}_{QB}$, $N_{ QB} = -L_{B} - U_{B} + SD_{B} + SU_{B}$. Notice that for the sign matrix $J$, $J{D}_{QB}J={D}_{QB}$ and $J(-L_{B} - U_{B} + SD_{B} + SU_{B})J = L_{B} + U_{B} - SD_{B} + SU_{B}$, therefore, $JM_{\langle QB\rangle}^{-1}N_{\langle QB\rangle}J=M_{ QB}^{-1}N_{ QB}$. This means that $M_{\langle QB\rangle}^{-1}N_{\langle QB\rangle}$ is similar to $M_{ QB}^{-1}N_{ QB}$, and hence $\rho\left(M_{ QB}^{-1}N_{ QB}\right)=\rho\left(M_{\langle QB\rangle}^{-1}N_{\langle QB\rangle}\right)< 1$. This completes the proof.
\end{proof}
\begin{theorem}\label{the:gs}
  The preconditioned GS--PIA \eqref{equ:gsPIA} is convergent. 
\end{theorem}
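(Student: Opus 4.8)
The plan is to mimic the proof of Theorem~\ref{the:jacobi}, simply replacing the Jacobi splitting of the comparison matrix $\langle QB\rangle$ by its Gauss--Seidel splitting and then transferring the spectral bound back to $QB$ via conjugation by the sign matrix $J$. First I would fix the decomposition $\langle QB\rangle = {D}_{\langle QB\rangle} - {L}_{\langle QB\rangle} - {U}_{\langle QB\rangle}$ into diagonal, strict lower, and strict upper parts. Since $J$ is diagonal, conjugation by $J$ respects this block structure, so Theorem~\ref{thelem}(ii) gives ${D}_{\langle QB\rangle} = J{D}_{QB}J = {D}_{QB}$, ${L}_{\langle QB\rangle} = J{L}_{QB}J$, and ${U}_{\langle QB\rangle} = J{U}_{QB}J$. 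By Definition~\ref{defcom} the off-diagonal entries of $\langle QB\rangle$ are nonpositive, hence ${L}_{\langle QB\rangle}\ge 0$ and ${U}_{\langle QB\rangle}\ge 0$, while by Theorem~\ref{thelem}(i) the diagonal ${D}_{\langle QB\rangle}={D}_{QB}$ is positive.

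Next I would consider the Gauss--Seidel splitting $\langle QB\rangle = M_{\langle QB\rangle} - N_{\langle QB\rangle}$ with $M_{\langle QB\rangle} = {D}_{\langle QB\rangle} - {L}_{\langle QB\rangle}$ and $N_{\langle QB\rangle} = {U}_{\langle QB\rangle}$. Then $N_{\langle QB\rangle}\ge 0$ is immediate, and $M_{\langle QB\rangle}$ is lower triangular with positive diagonal and nonpositive off-diagonal entries, i.e.\ a triangular Z-matrix with positive diagonal; its inverse computed by forward substitution is nonnegative, so (equivalently, by Lemma~\ref{lemmmatrix}) $M_{\langle QB\rangle}^{-1}\ge 0$. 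Hence the Gauss--Seidel splitting of $\langle QB\rangle$ is regular. Since Theorem~\ref{thelem}(iv) asserts that $\langle QB\rangle$ is nonsingular with $\langle QB\rangle^{-1}\ge 0$, Lemma~\ref{lem1} yields $\rho\big(M_{\langle QB\rangle}^{-1}N_{\langle QB\rangle}\big) < 1$.

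Finally I would transfer this to $QB$ itself. The iteration matrix of the preconditioned GS--PIA \eqref{equ:gsPIA} is $({D}_{QB}-{L}_{QB})^{-1}{U}_{QB}$, i.e.\ $M_{QB}^{-1}N_{QB}$ for the Gauss--Seidel splitting $QB = M_{QB} - N_{QB}$, $M_{QB} = {D}_{QB}-{L}_{QB}$, $N_{QB} = {U}_{QB}$. Using $J^{-1}=J$ and the identities above, $JM_{QB}J = {D}_{QB} - J{L}_{QB}J = M_{\langle QB\rangle}$ and $JN_{QB}J = N_{\langle QB\rangle}$, so $J\,M_{QB}^{-1}N_{QB}\,J = M_{\langle QB\rangle}^{-1}N_{\langle QB\rangle}$. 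Thus the two iteration matrices are similar, and $\rho\big(({D}_{QB}-{L}_{QB})^{-1}{U}_{QB}\big) = \rho\big(M_{\langle QB\rangle}^{-1}N_{\langle QB\rangle}\big) < 1$, which proves convergence.

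There is no deep obstacle here: the argument is the Gauss--Seidel analogue of Theorem~\ref{the:jacobi}, and all the heavy lifting (total positivity of $B$, sign-regularity of $Q^{-1}$, the M-matrix property of $\langle QB\rangle$) is already packaged in Theorem~\ref{thelem}. The only points requiring care are (a) the verification that $M_{\langle QB\rangle}^{-1}\ge 0$, which hinges on Theorem~\ref{thelem}(i) (positivity of ${D}_{QB}$) and the nonpositivity of the strict lower part of $\langle QB\rangle$, and (b) the bookkeeping showing that conjugation by $J$ intertwines the Gauss--Seidel splittings of $QB$ and $\langle QB\rangle$, where Theorem~\ref{thelem}(ii) is essential — without it one cannot replace the (non-Z, indefinite-sign) matrix $QB$ by the M-matrix $\langle QB\rangle$ for which Lemma~\ref{lem1} applies.
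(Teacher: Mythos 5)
Your proposal is correct and follows essentially the same route as the paper: form the Gauss--Seidel splitting of the comparison matrix $\langle QB\rangle$, check it is a regular splitting (nonnegativity of $U_{\langle QB\rangle}$ and of $(D_{\langle QB\rangle}-L_{\langle QB\rangle})^{-1}$, the latter via the nilpotency of $D_{\langle QB\rangle}^{-1}L_{\langle QB\rangle}$, which your forward-substitution argument reproduces), invoke Theorem~\ref{thelem}(iv) with Lemma~\ref{lem1}, and transfer the spectral bound to $QB$ by $J$-similarity. You merely write out explicitly the final similarity step that the paper compresses into ``the rest of this proof is similar to that of Theorem~\ref{the:jacobi}.''
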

\begin{proof}
  Consider the Gauss--Seidel splitting of the comparison matrix of $ QB$, i.e., $\langle QB\rangle = M_{\langle QB\rangle}-N_{\langle QB\rangle},$
  where $M_{\langle QB\rangle} = {D}_{\langle QB\rangle}-{L}_{\langle QB\rangle}=D_{B}-SL_{B}-L_{B}$, $N_{\langle QB\rangle} = U_{B} - SD_{B} + SU_{B}$. It is easy to verify that $N_{\langle QB\rangle}\geq 0$ and ${D}_{\langle QB\rangle}$ is a diagonal matrix with positive diagonals. Therefore, $\left(M_{\langle QB\rangle}\right)^{-1}=\left({D}_{\langle QB\rangle}-{L}_{\langle QB\rangle}\right)^{-1}=\left(I-{D}_{\langle QB\rangle}^{-1}{L}_{\langle QB\rangle}\right)^{-1}{D}_{\langle QB\rangle}^{-1}$.
  Since ${D}_{\langle QB\rangle}^{-1}{L}_{\langle QB\rangle}$ is a strict lower triangular matrix, all the eigenvalues of ${D}_{\langle QB\rangle}^{-1}{L}_{\langle QB\rangle}$ equal to $0$. Hence the spectral radius of ${D}_{\langle QB\rangle}^{-1}{L}_{\langle QB\rangle}$ is less than $1$, then it follows from Lemma \ref{lem6} that
  $
    ({D}_{\langle QB\rangle}-{L}_{\langle QB\rangle})^{-1}=\sum\limits_{k=0}^{\infty}\left({D}_{\langle QB\rangle}^{-1}{L}_{\langle QB\rangle}\right)^{-1}{D}_{\langle QB\rangle}^{-1}\geq 0.
  $
  Conbined with the condition $N_{\langle QB\rangle}\geq 0$, we can conclude that the splitting $\langle QB\rangle = (D_{QB}-L_{QB})-U_{QB}$ is a regular one. The rest of this proof is similar to that of Theorem \ref{the:jacobi}.
  \end{proof}


In \cite{Varga2000}, the optimal relaxation factor $\omega$ for the SOR iteration is discussed. Since the preconditioned SOR--PIA is equivalent to the SOR iteration for solving the linear system \eqref{equ:precollocation}, we in the following theorem present the optimal relaxation factor $\omega$ for the preconditioned SOR--PIA.
\begin{theorem}[\cite{Varga2000}]\label{the:sor}
  When $\omega=\frac{2}{1+\sqrt{1-\rho^2}}$, the preconditioned SOR--PIA \eqref{equ:PjPIA} converges and has the fastest convergence rate, where $\rho$ is the spectral radius of the iteration matrix of the preconditioned Jacobi--PIA.
\end{theorem}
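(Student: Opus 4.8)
The plan is to obtain the statement as an instance of the classical theory of the optimal relaxation factor for the SOR method, for which \cite{Varga2000} is the standard reference. As already observed just before the statement, the preconditioned SOR--PIA \eqref{equ:sorPIA} is precisely the SOR iteration applied to the preconditioned collocation system \eqref{equ:precollocation} with respect to the splitting $QB=D_{QB}-L_{QB}-U_{QB}$; the Jacobi iteration matrix associated with this splitting is $\mathcal{J}=D_{QB}^{-1}(L_{QB}+U_{QB})$, and the number $\rho$ in the statement is, by definition, its spectral radius. By Theorem \ref{the:jacobi} we already know $\rho<1$, and by Theorem \ref{thelem}(i) the matrix $D_{QB}$ is nonsingular, so the SOR iteration matrix $\mathcal{L}_{\omega}=(D_{QB}-\omega L_{QB})^{-1}\big((1-\omega)D_{QB}+\omega U_{QB}\big)$ is well defined for every $\omega\in(0,2)$.

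The substantive step is to verify that $QB$ meets the hypotheses of Young's theorem: that $QB$ is consistently ordered in the sense of Young, and that the eigenvalues of $\mathcal{J}$ are real. For both points I would pass to the comparison matrix, using Theorem \ref{thelem}(ii): since $\langle QB\rangle=J(QB)J$ with $J=\mathrm{diag}(1,-1,\ldots,(-1)^{n+1})$, the matrix $\mathcal{J}$ is similar to the Jacobi iteration matrix of the M-matrix $\langle QB\rangle$ of Theorem \ref{thelem}(iv), which is nonnegative with positive diagonal (Theorem \ref{thelem}(i)); one then reads off the required structural properties from the explicit banded sparsity pattern of $QB$ given in \eqref{equ:collm1}, renumbering the interpolation points if necessary.

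Granting these facts, Young's theory (see \cite{Varga2000}) supplies the functional relation $(\lambda+\omega-1)^{2}=\omega^{2}\mu^{2}\lambda$ between the eigenvalues $\lambda$ of $\mathcal{L}_{\omega}$ and the eigenvalues $\mu$ of $\mathcal{J}$; minimizing $\rho(\mathcal{L}_{\omega})$ over $\omega\in(0,2)$ under the constraints that the $\mu$ are real and $\rho(\mathcal{J})=\rho<1$ is the textbook computation that delivers $\omega=2/(1+\sqrt{1-\rho^{2}})$ together with $\rho(\mathcal{L}_{\omega})=\omega-1<1$, which is precisely the assertion. I expect the only genuine obstacle to lie in that second step: because preconditioning with $Q=I+S$ creates a nonzero second super-diagonal, $QB$ is pentadiagonal rather than tridiagonal (cf. \eqref{equ:collm1}), so --- unlike for the unpreconditioned collocation matrix $B$ --- neither consistent ordering nor reality of the Jacobi spectrum is automatic, and this is the point that must be handled with care.
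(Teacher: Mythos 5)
The paper does not actually prove this statement: it is presented as an imported result, with only the remark that the preconditioned SOR--PIA is the SOR iteration for \eqref{equ:precollocation} and a citation to Varga. Your reconstruction via Young's theory of the optimal relaxation parameter is therefore the natural (and essentially the only plausible) route, and your reduction steps --- identifying $\rho$ as $\rho\bigl(D_{QB}^{-1}(L_{QB}+U_{QB})\bigr)$, using Theorem \ref{thelem}(i) for nonsingularity of $D_{QB}$ and Theorem \ref{the:jacobi} for $\rho<1$ --- are all sound.

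However, the obstacle you flag in your last sentence is not a detail to be ``handled with care''; it is a genuine, unresolved gap, and as far as I can see it cannot be closed in the form you propose. By \eqref{equ:collm1}, $(QB)_{i,i+2}=-N_{i-1}^{3}(t_{i})N_{i}^{3}(t_{i+1})$ is strictly negative for $i=2,\ldots,n-2$, so $QB$ has genuinely nonzero entries on both the first and the second super-diagonal. A matrix with nonzero couplings $i\leftrightarrow i+1$ and $i\leftrightarrow i+2$ cannot have Property A: any two-coloring of the indices forced by the first coupling alternates, which places $i$ and $i+2$ in the same class and contradicts the second coupling. Hence $QB$ is not consistently ordered under any renumbering, the functional relation $(\lambda+\omega-1)^{2}=\omega^{2}\mu^{2}\lambda$ is not available, and the reality of the Jacobi spectrum (which for the tridiagonal $B$ follows from a diagonal similarity to a symmetric matrix) is likewise lost. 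Passing to the comparison matrix $\langle QB\rangle=J(QB)J$ does not help, since similarity preserves the sparsity pattern up to signs and so $\langle QB\rangle$ fails Property A for the same reason. Your proposal therefore establishes convergence machinery but not the optimality formula $\omega=2/(1+\sqrt{1-\rho^{2}})$; to be fair, the paper's own appeal to \cite{Varga2000} suffers from exactly the same defect, and at best the formula should be read as a heuristic choice of $\omega$ rather than a proven optimum for the preconditioned system. (Incidentally, the statement's reference to \eqref{equ:PjPIA} should be to the preconditioned SOR--PIA iteration, not the Jacobi one.)
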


\section{Numerical results}\label{sec4}
In this section some numerical experiments are conducted to access the effectiveness of the preconditioning technique for PIA and its variants. All these experiments were performed in Matlab.


In our tests, we employed the GIMs and their corresponding preconditioned GIMs to interpolate the data given in Examples \ref{eg1} - \ref{eg6}. We use
 \begin{equation}\label{error2}
 \varepsilon^{(k)} = \max_{1\leq i\leq n}\left\|{\boldsymbol{p}}_{i}-{\boldsymbol{C}}^{(k)}(t_{i})\right\|
\end{equation}
to measure the interpolation error of the $k$-th approximate interpolation curve ${\boldsymbol{C}}^{(k)}(t)$.
For simplicity, the preconditioned techniques are denoted by PPIA, PWPIA, PJacobi--PIA, PGS--PIA and PSOR--PIA, the number of iterations and the computing time (in seconds) are denoted by ``$k$'' and ``$T$'', respectively.

\begin{example}[Outline of a duck]\label{eg1}
Consider data interpolation of $40$ points:
\emph{(--0.2356, 0.3978), (--0.2044, 0.4178), (--0.1711, 0.4289), (--0.1467, 0.4733), (--0.1022, 0.4978), (--0.0533, 0.4933), (--0.0200, 0.4667), (0, 0.4444), (0.0089, 0.4111), (--0.0044, 0.3667), (--0.0333, 0.3311), (--0.0778, 0.2756), (--0.1067, 0.2400), (--0.1178, 0.2000), (--0.0889, 0.1778), (-- 0.0511, 0.2156), (0.0156, 0.2533), (0.0844, 0.2778), (0.1467, 0.2956), (0.2111, 0.2911), (0.2556, 0.2644), (0.2578, 0.2222), (0.2267, 0.1911), (0.2667, 0.1800), (0.2622, 0.1467), (0.2222, 0.1111), (0.2467, 0.0933), (0.2267, 0.0556), (0.1800, 0.0289), (0.0200, 0.0244), (--0.1311, 0.0267), (--0.1711, 0.0711), (--0.2133, 0.1356), (--0.2133, 0.2067), (--0.1822, 0.2622), (--0.1311, 0.3178), (--0.1000, 0.3733), (--0.1533, 0.3733), (--0.2178, 0.3689), (--0.2311, 0.3822), (--0.2356, 0.3978).}
\end{example}

\begin{example}[Butterfly curve]\label{eg2}
Consider data interpolation of $150$ points, which are sampled from
\begin{center}
$r=\left(\sin \theta+\sin (3.5 \theta)^{3}\right)/1000;  0 \leq \theta \leq 2 \pi.$
\end{center}
\end{example}

\begin{example}[Chrysanthemum curve]\label{eg3}
Consider data interpolation of $500$ points, which are sampled from
\begin{center}
$r=\left(5\left(1+\sin \frac{11 \theta}{5}\right)-4 \sin ^{4} \frac{17 \theta}{3} \sin ^{8}(2 \cos 3 \theta-28 \theta)\right) / 50;  0 \leq \theta \leq 21 \pi.$
\end{center}
\end{example}

\begin{example}[Spatial circular curve]\label{eg4}
Consider data interpolation of $300$ points, which are sampled from
\begin{center}
$\left\{\begin{array}{l}
x=(4+\sin 20 t) \cos t \\
y=(4+\sin 20 t) \sin t;  -7 \pi \leq t \leq 7 \pi. \\
z=\cos 20 t
\end{array}\right.$
\end{center}
\end{example}

\begin{example}[Three dimensional three leaf rose curve]\label{eg5}
Consider data interpolation of $200$ points, which are sampled from
\begin{center}
$\left\{\begin{array}{l}
x=\sin 3 t \cos t \\
y=\sin 3 t \sin t;  -2 \pi \leq t \leq 2 \pi. \\
z=t
\end{array}\right.$
\end{center}
\end{example}

\begin{example}[Spherical cardioid curve]\label{eg6}
Consider data interpolation of $n~(n=1000,2000)$ points, which are sampled from
\begin{center}
$\left\{\begin{array}{l}
x=2 \cos t-\cos 2 t \\
y=2 \sin t-\sin 2 t;  0<t<4 \pi. \\
z=\sqrt{8} \cos \frac{2}{t}
\end{array}\right.$
\end{center}
\end{example}
\begin{table}[H]
	\centering
	\caption{The spectral radii of iteration matrices of  GIMs and preconditioned GIMs in Examples \ref{eg1} - \ref{eg6}.}
	\label{tab1}
	\begin{tabular}{*{8}{c}}
		\toprule
         \multirow{2}*{Method} & \multirow{2}*{Example \ref{eg1}} & \multirow{2}*{Example \ref{eg2}} & \multirow{2}*{Example \ref{eg3}} & \multirow{2}*{Example \ref{eg4}} & \multirow{2}*{Example \ref{eg5}} & \multicolumn{2}{c}{Example \ref{eg6}}\\
		\cmidrule(r){7-8}
         &   &   &   &   &   &$n=1000$ & $n=2000$ \\
         \midrule
          PIA         & 0.6890 & 0.7252 & 0.9653 & 0.6666 & 0.6676 & 0.7049 & 0.7049\\	
          PPIA        & 0.6439 & 0.6791 & 0.9541 & 0.6070 & 0.6079 & 0.6588 & 0.6588\\		
          WPIA        & 0.5256 & 0.5689 & 0.9329 & 0.5000 & 0.5010 & 0.5443 & 0.5443\\	
          PWPIA       & 0.4748 & 0.5141 & 0.9122 & 0.4357 & 0.4367 & 0.4912 & 0.4912\\
          Jacobi-PIA  & 0.5065 & 0.5309 & 0.9329 & 0.5000 & 0.5000 & 0.5130 & 0.5130\\
          PJacobi-PIA & 0.3891 & 0.3928 & 0.8762 & 0.3847 & 0.3844 & 0.3956 & 0.3956\\
          GS-PIA      & 0.2566 & 0.2902 & 0.8703 & 0.3081 & 0.2974 & 0.3261 & 0.3299\\
          PGS-PIA     & 0.1204 & 0.1447 & 0.7676 & 0.1573 & 0.1504 & 0.1710 & 0.1739\\
          SOR-PIA     & 0.1053 & 0.2168 & 0.5405 & 0.2381 & 0.2241 & 0.2641 & 0.2674\\
          PSOR-PIA    & 0.0498 & 0.1021 & 0.3931 & 0.1156 & 0.1075 & 0.1290 & 0.1318\\
		\bottomrule
	\end{tabular}
\end{table}
\begin{figure}[H]
\centering
	 \subfigure[Example \ref{eg1}.]{
		\includegraphics[width=0.31\textwidth]{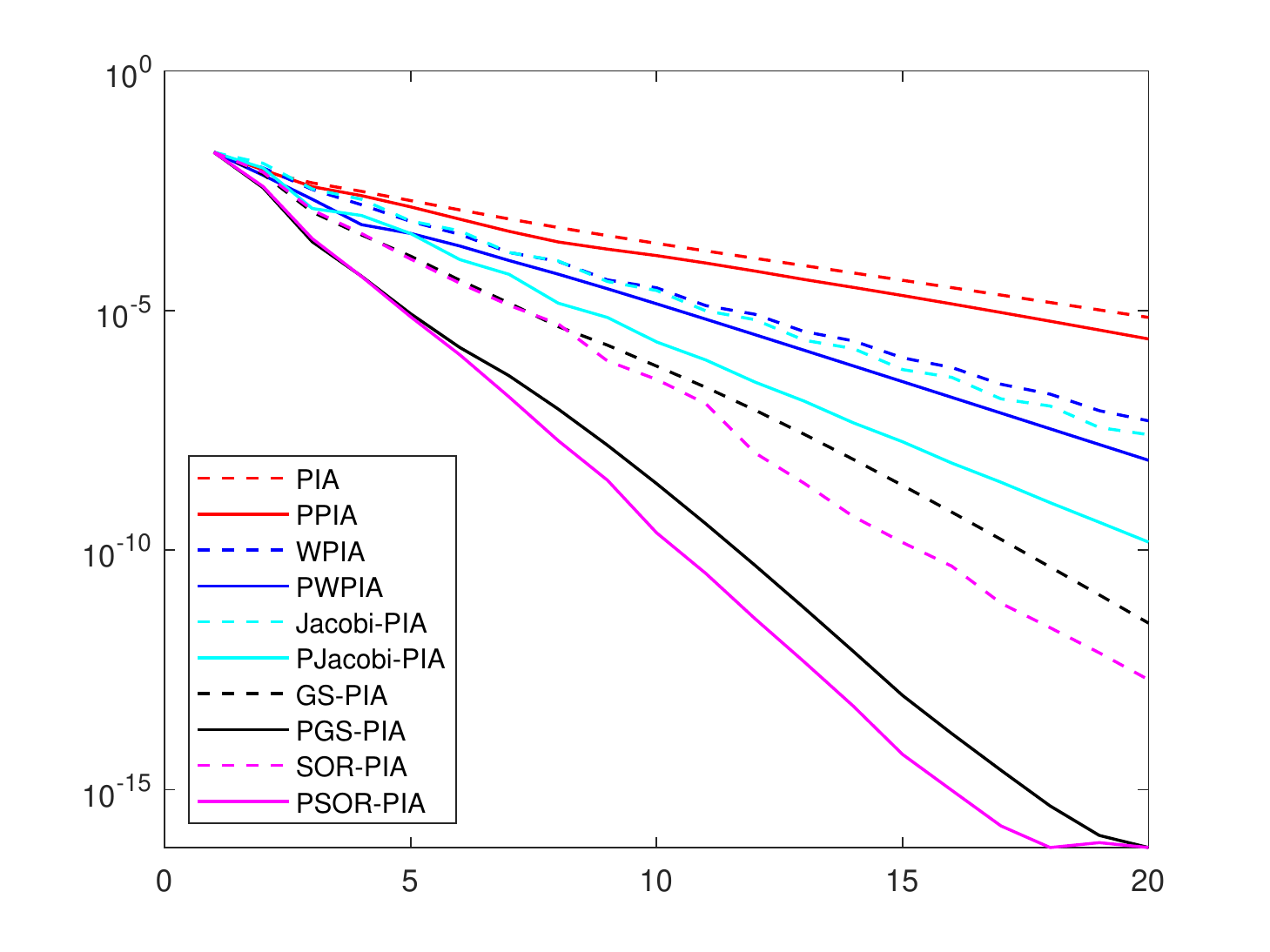}}
     \subfigure[Example \ref{eg2}.]{
		\includegraphics[width=0.31\textwidth]{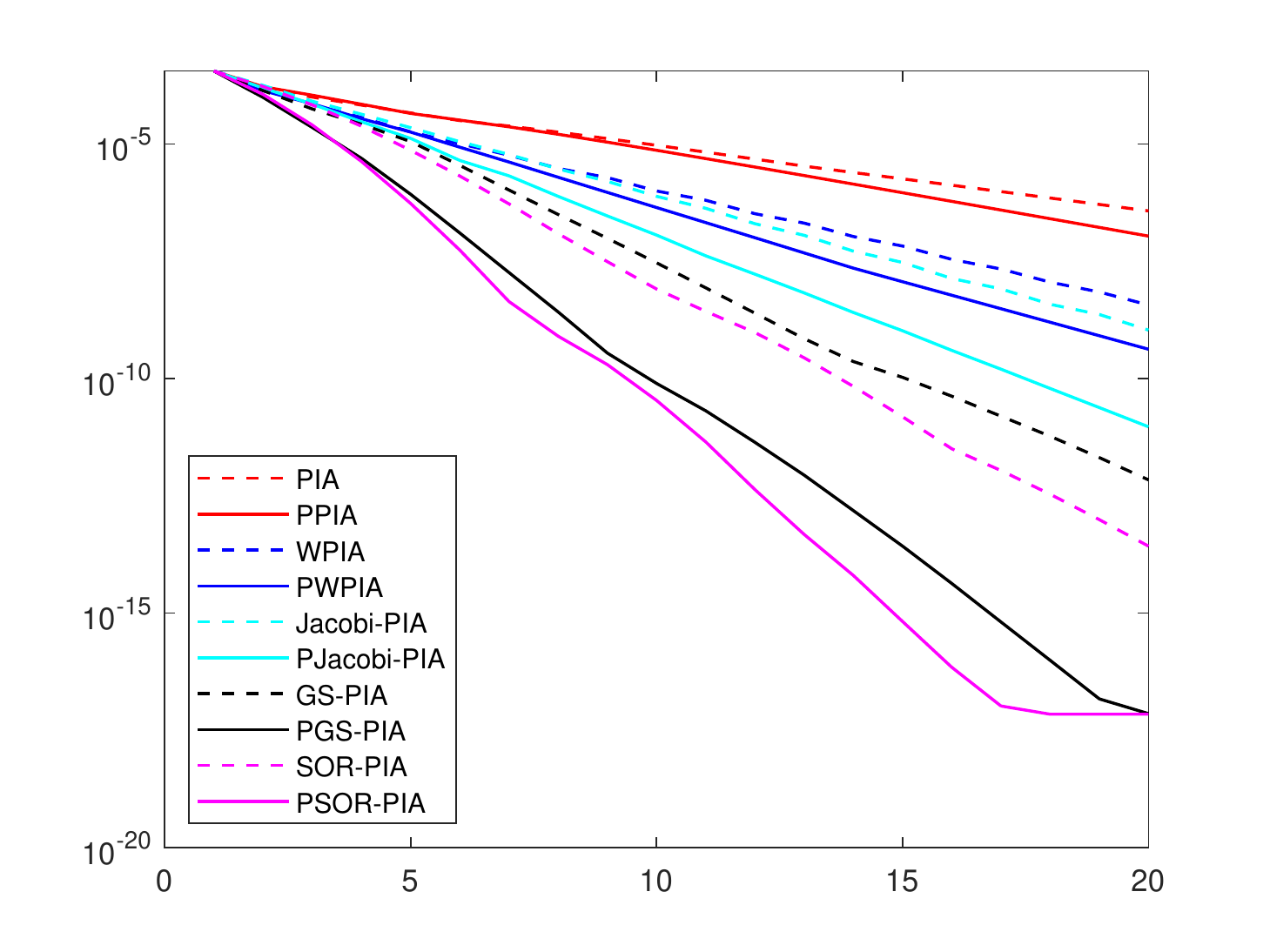}}
     \subfigure[Example \ref{eg3}.]{
		\includegraphics[width=0.31\textwidth]{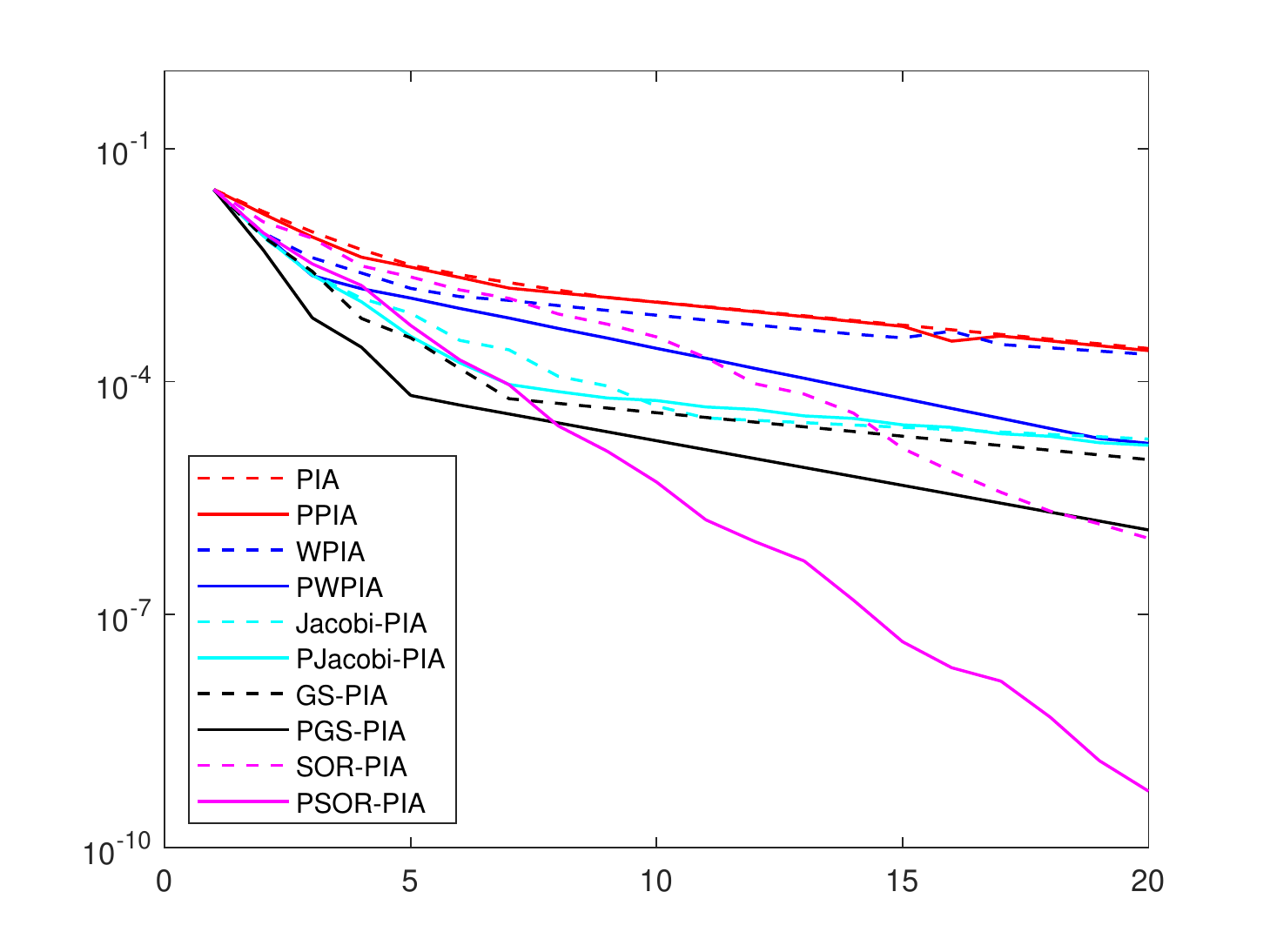}}
     \subfigure[Example \ref{eg4}.]{
		\includegraphics[width=0.31\textwidth]{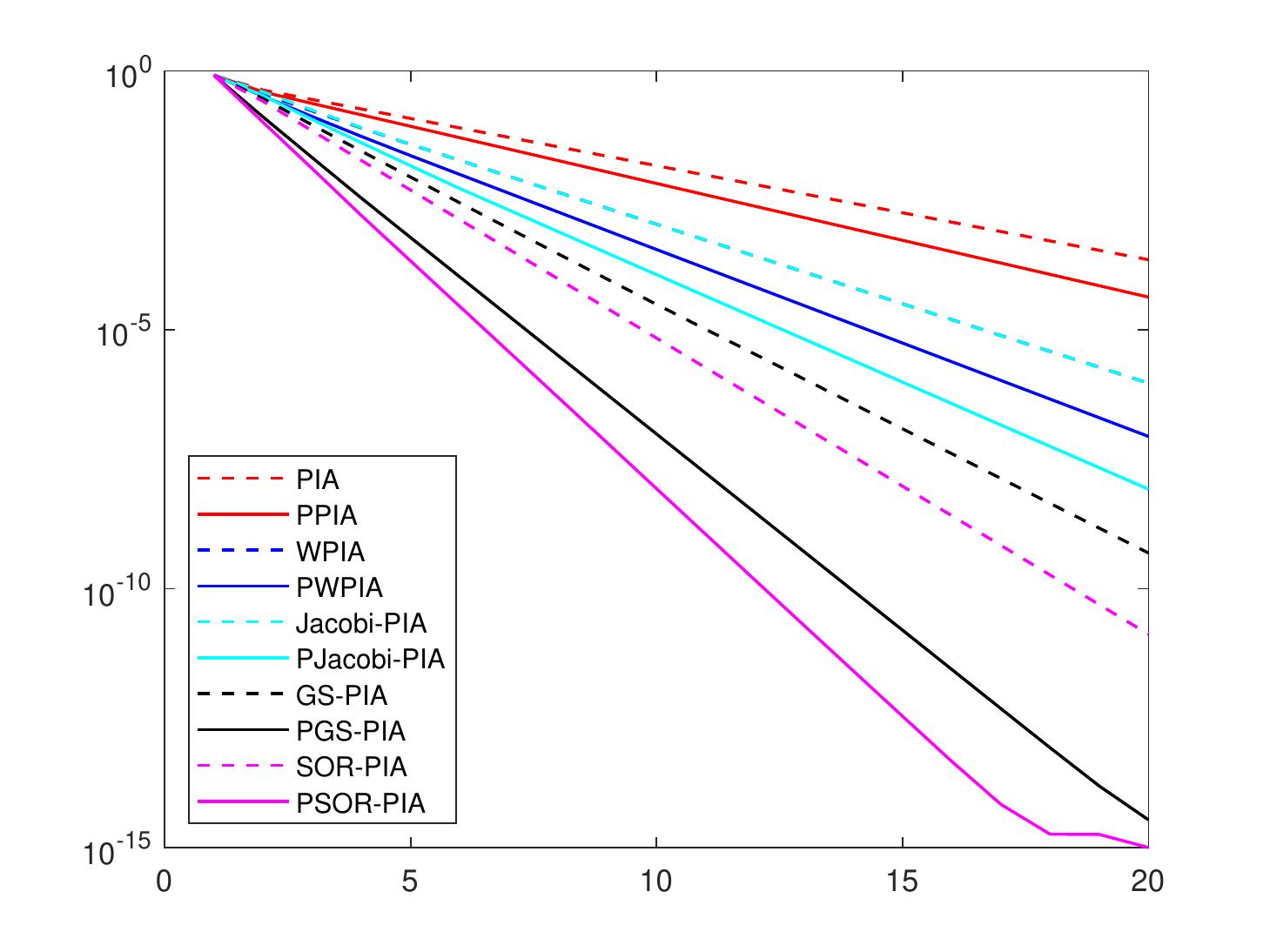}}
     \subfigure[Example \ref{eg5}.]{
		\includegraphics[width=0.31\textwidth]{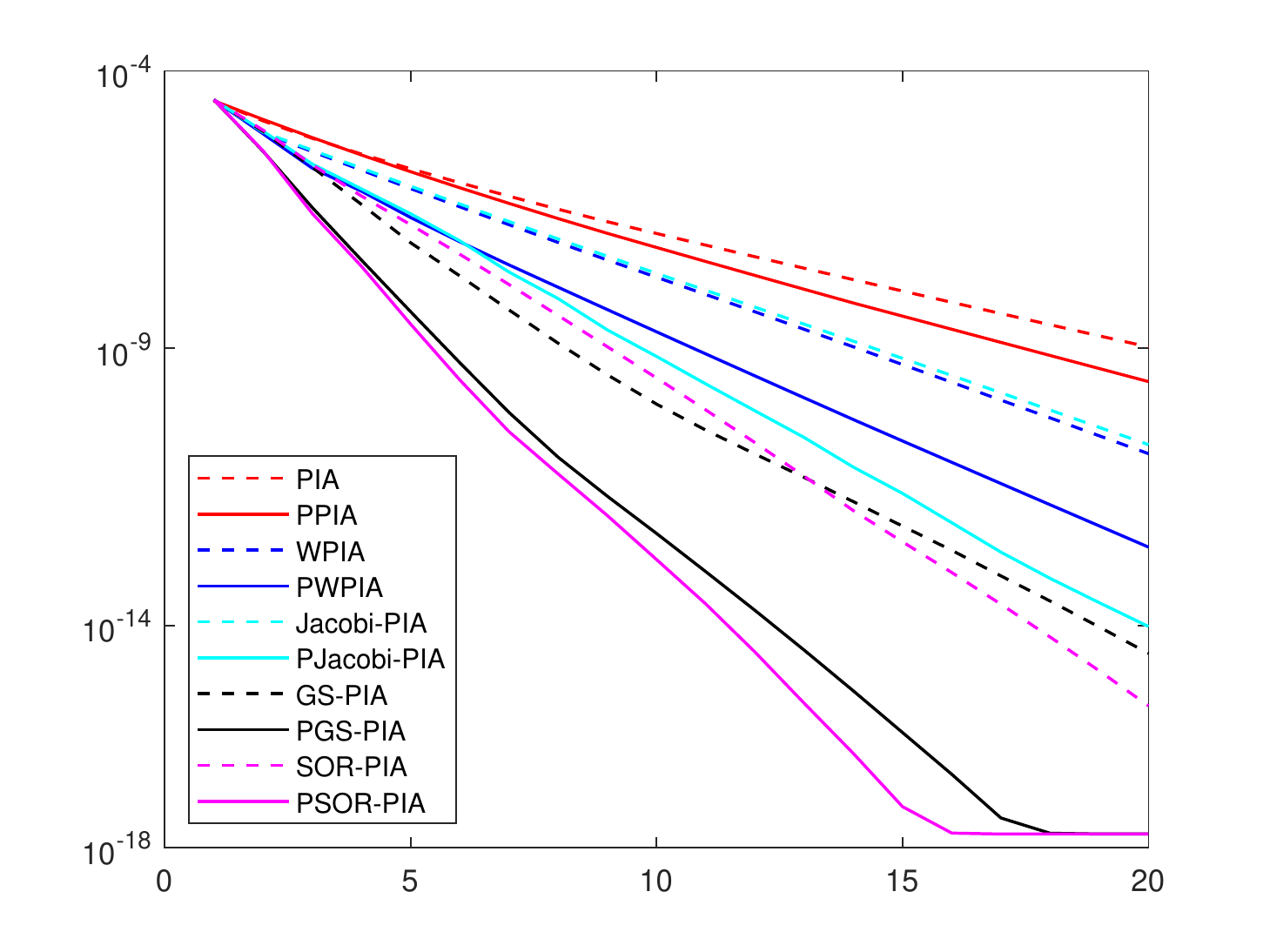}}
\caption{ Comparison of interpolation errors vs number of iterations in Examples \ref{eg1} - \ref{eg5}.}
\label{fig:abcd}
\end{figure}

Firstly, we list in Table \ref{tab1} the spectral radii of the GIMs and their preconditioning techniques for Examples \ref{eg1} - \ref{eg6}. We can observe that the spectral radii of preconditioned GIMs are smaller than those without preconditioning. Therefore, we can expect that our proposed preconditioning GIMs outperform their corresponding GIMs without preconditioning.

By employing the GIMs and the preconditioned GIMs to interpolate the data given in Examples \ref{eg1} - \ref{eg5}, we compare in Figure \ref{fig:abcd} the interpolation errors. In Figure \ref{fig:abcd}, the red, blue, cyan, black, and magenta dashed lines are the curves of interpolation errors obtained by PIA, WPIA, Jacobi--PIA, GS--PIA, and SOR--PIA, respectively. Again, the red, blue, cyan, black, and magenta solid lines are the curves of interpolation errors obtained by the PPIA, the PWPIA, the PJacobi--PIA, the PGS--PIA, and the PSOR--PIA, respectively. The results reported in Figure \ref{fig:abcd} show that the interpolation errors decrease gradually as the number of iterations increases, and the interpolation errors obtained by the preconditioned GIMs are less than those without preconditioning.

Secondly, we turn to compare the computing time. Given a user-defined interpolation error, we list in Table \ref{tab2} the required number of iterations and computing time when we employ the (preconditioned) GIMs to interpolate $n\ (n=1000, 2000)$ points in Example \ref{eg6}. From Table \ref{tab2}, we can see that under the requirement of the same approximation error, the preconditioned GIMs need fewer iterations than those without preconditioning. And the computing times implemented by the PPIA, the PJacobi--PIA, and the PGS--PIA are less than those without preconditioning. This is because the acceleration of the preconditioned technique leads to a reduction in overall costs. The results provide evidence of the fact that the proposed preconditioning technique is efficient in accelerating the convergence of GIMs. It should be noted that some exceptions can be found in Table \ref{tab2}. That is, the computing times implemented by the PWPIA and the PSOR--PIA are more than those without preconditioning. This can be interpreted that there exists a relaxation factor $\omega$, and one has to compute the maximum and minimum eigenvalues to determine the optimal relaxation factor. It is known that this computation is costly, especially on large-scale problems. Thus, the PWPIA and the PSOR--PIA may be slow in time.

\begin{table}[H]
	\centering
	\caption{Required number of iterations and computer time for fixed interpolation error $n$ in Example \ref{eg6}.}
	\label{tab2}
	\begin{tabular}{*{12}{c}}
		\toprule
		\multirow{2}*{$n$}  & \multirow{2}*{$\varepsilon$}  & \multicolumn{2}{c}{PIA} & \multicolumn{2}{c}{WPIA} & \multicolumn{2}{c}{Jacobi-PIA} & \multicolumn{2}{c}{GS-PIA}& \multicolumn{2}{c}{SOR-PIA}\\
		\cmidrule(lr){3-4}\cmidrule(lr){5-6}\cmidrule(lr){7-8}\cmidrule(lr){9-10}\cmidrule(lr){11-12}
		 &    & $k$ & $T$ & $k$ & $T$ & $k$ & $T$ & $k$ & $T$ & $k$ & $T$\\
        \midrule
        \multirow{2}*{$1000$} &1e-10 &35&6.29e-02&24&2.02e+00&24&8.99e-02&16&5.62e-02&15  &1.08e+00\\
                              &1e-12&46&8.66e-02&31&1.88e+00&31&1.19e-01&20   &6.15e-02&19&1.06e+00\\
        \midrule\midrule
        \multirow{2}*{$n$}  & \multirow{2}*{$\varepsilon$}  & \multicolumn{2}{c}{PPIA} & \multicolumn{2}{c}{PWPIA} & \multicolumn{2}{c}{PJacobi-PIA} & \multicolumn{2}{c}{PGS-PIA}& \multicolumn{2}{c}{PSOR-PIA}\\
		\cmidrule(lr){3-4}\cmidrule(lr){5-6}\cmidrule(lr){7-8}\cmidrule(lr){9-10}\cmidrule(lr){11-12}
		  &  & $k$ & $T$ & $k$ & $T$ & $k$ & $T$ & $k$ & $T$ & $k$ & $T$\\
        \midrule
        \multirow{2}*{$1000$} &1e-10&31&5.64e-02&19&2.30e+00&17& 7.79e-02&11&4.50e-02&10&1.28e+00\\
                              &1e-12&40&8.46e-02&25&2.34e+00&21   &9.95e-02&14&5.40e-02&13&1.30e+00\\
        \midrule\midrule
        \multirow{2}*{$n$}  & \multirow{2}*{$\varepsilon$}  & \multicolumn{2}{c}{PIA} & \multicolumn{2}{c}{WPIA} & \multicolumn{2}{c}{Jacobi-PIA} & \multicolumn{2}{c}{GS-PIA}& \multicolumn{2}{c}{SOR-PIA}\\
		\cmidrule(lr){3-4}\cmidrule(lr){5-6}\cmidrule(lr){7-8}\cmidrule(lr){9-10}\cmidrule(lr){11-12}
		 &    & $k$ & $T$ & $k$ & $T$ & $k$ & $T$ & $k$ & $T$ & $k$ & $T$\\
        \midrule
        \multirow{2}*{$2000$} &1e-10&34&3.56e-01&22&9.06e+00&22& 5.99e-01&15&3.62e-01&14&5.03e+00\\
                              &1e-12&44&4.47e-01&29&9.13e+00&29&  6.18e-01&19&3.61e-01&18&5.04e+00\\
        \midrule\midrule
        \multirow{2}*{$n$}  & \multirow{2}*{$\varepsilon$}  & \multicolumn{2}{c}{PPIA} & \multicolumn{2}{c}{PWPIA} & \multicolumn{2}{c}{PJacobi-PIA} & \multicolumn{2}{c}{PGS-PIA}& \multicolumn{2}{c}{PSOR-PIA}\\
		\cmidrule(lr){3-4}\cmidrule(lr){5-6}\cmidrule(lr){7-8}\cmidrule(lr){9-10}\cmidrule(lr){11-12}
		  &   & $k$ & $T$ & $k$ & $T$ & $k$ & $T$ & $k$ & $T$ & $k$ & $T$\\
        \midrule
        \multirow{2}*{$2000$} &1e-10&29&2.88e-01&18&1.05e+01&16& 3.52e-01&10&2.45e-01&10&5.81e+00\\
                              &1e-12&38&3.63e-01&24&1.06e+01&21&  4.50e-01&13&3.15e-01&12&5.78e+00\\
		\bottomrule
	\end{tabular}
\end{table}
The data points and approximate interpolation B-spline curves are plotted in Figures \ref{fig:ex1} -- \ref{fig:ex5}, where the initial cubic B-spline curves are in sub-figures (a), the cubic B-spline curves iterated by the GIMs after $5$ iterations are in sub-figures (b), and the cubic B-spline curves iterated by the preconditioned GIMs after $5$ iterations are in sub-figures (c). It can be seen from Figures \ref{fig:ex1} -- \ref{fig:ex5} that the preconditioned GIMs perform well in interpolating given data sets.
\begin{figure}[H]
\centering
	 \subfigure[$\boldsymbol{C}^{(0)}\left(t\right)$.]{
		\includegraphics[width=0.31\textwidth]{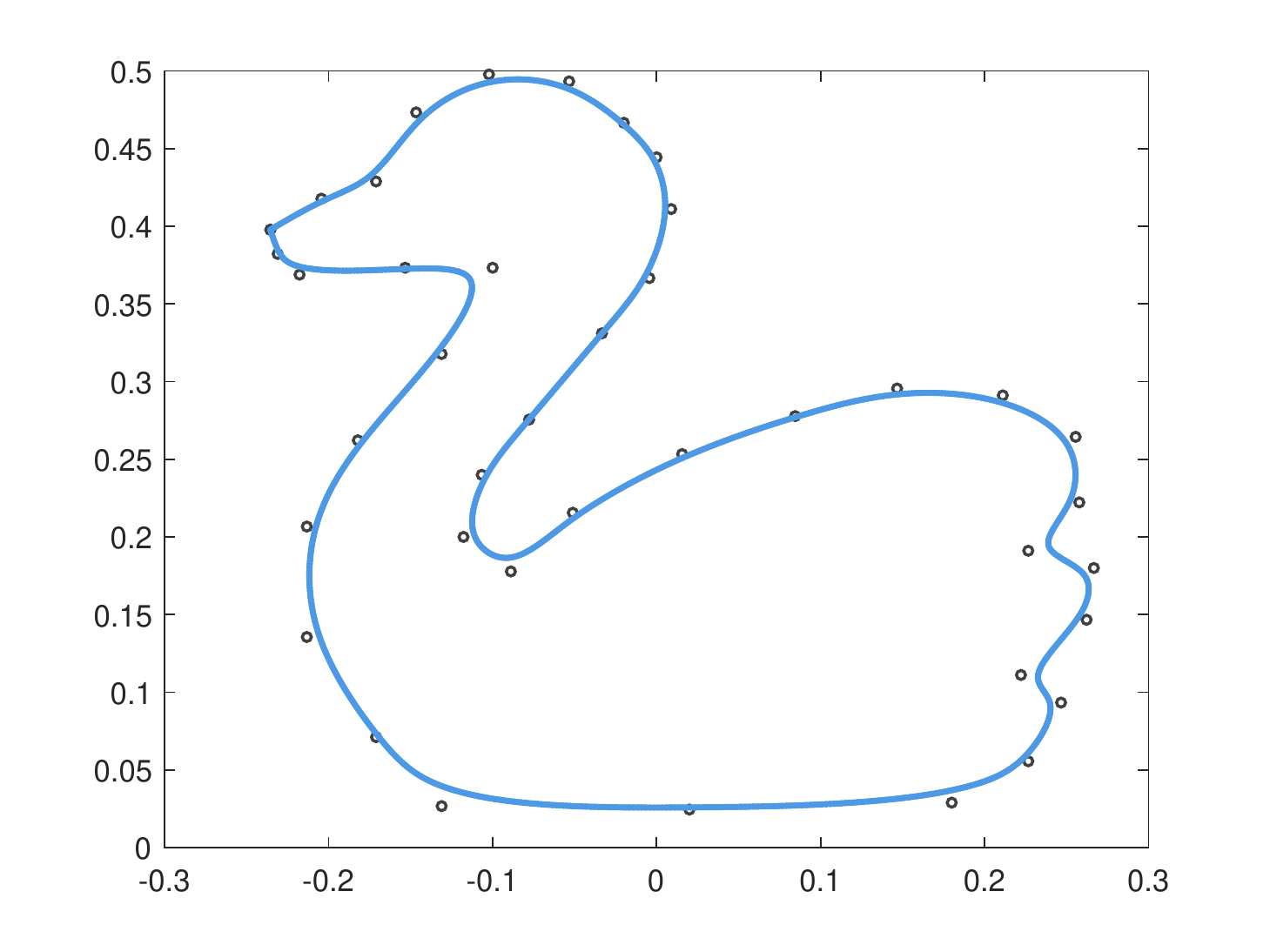}}
     \subfigure[$\boldsymbol{C}^{(5)}\left(t\right)$ by PIA.]{
		\includegraphics[width=0.31\textwidth]{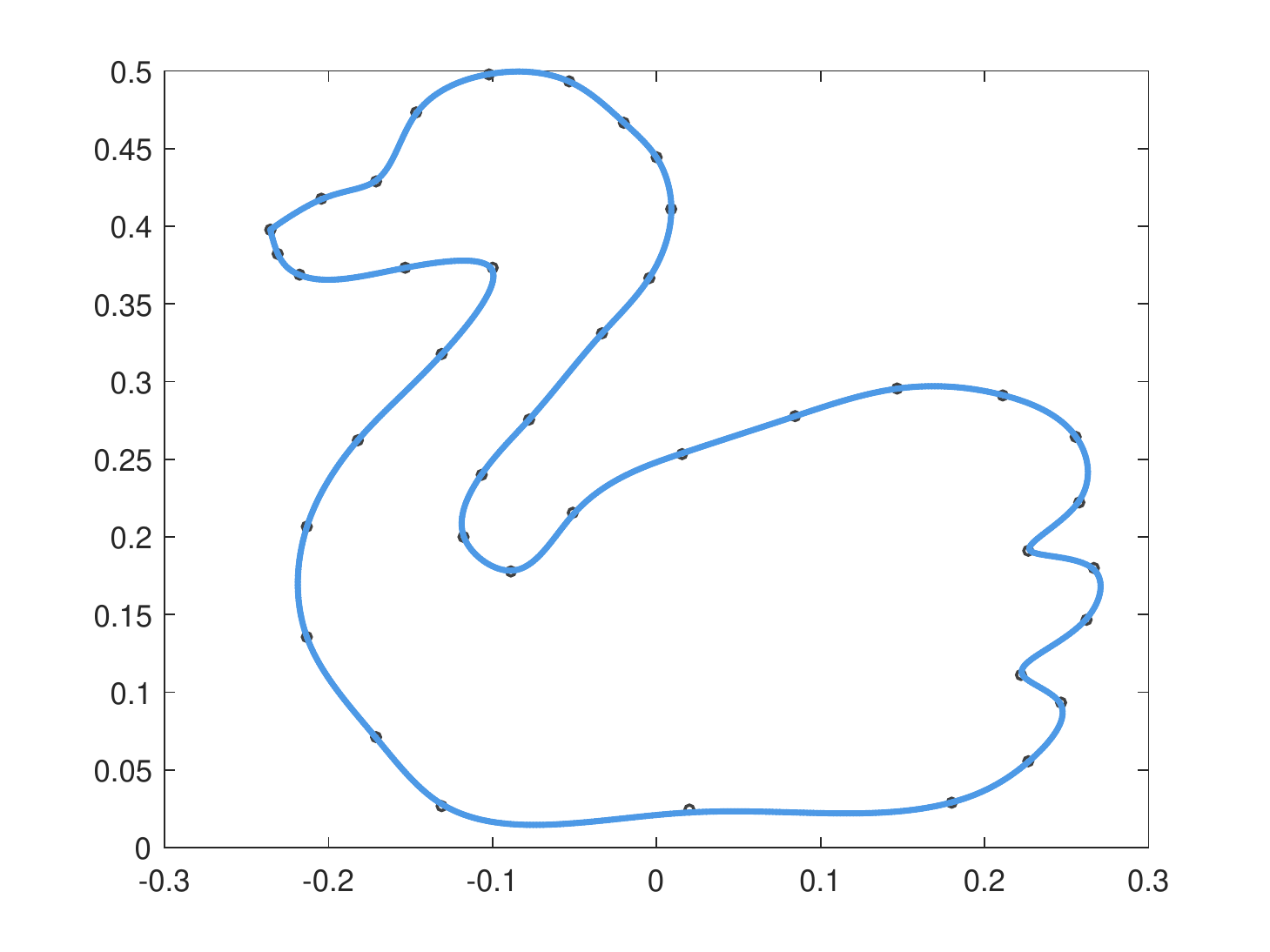}}
     \subfigure[$\boldsymbol{C}^{(5)}\left(t\right)$ by PPIA.]{
		\includegraphics[width=0.31\textwidth]{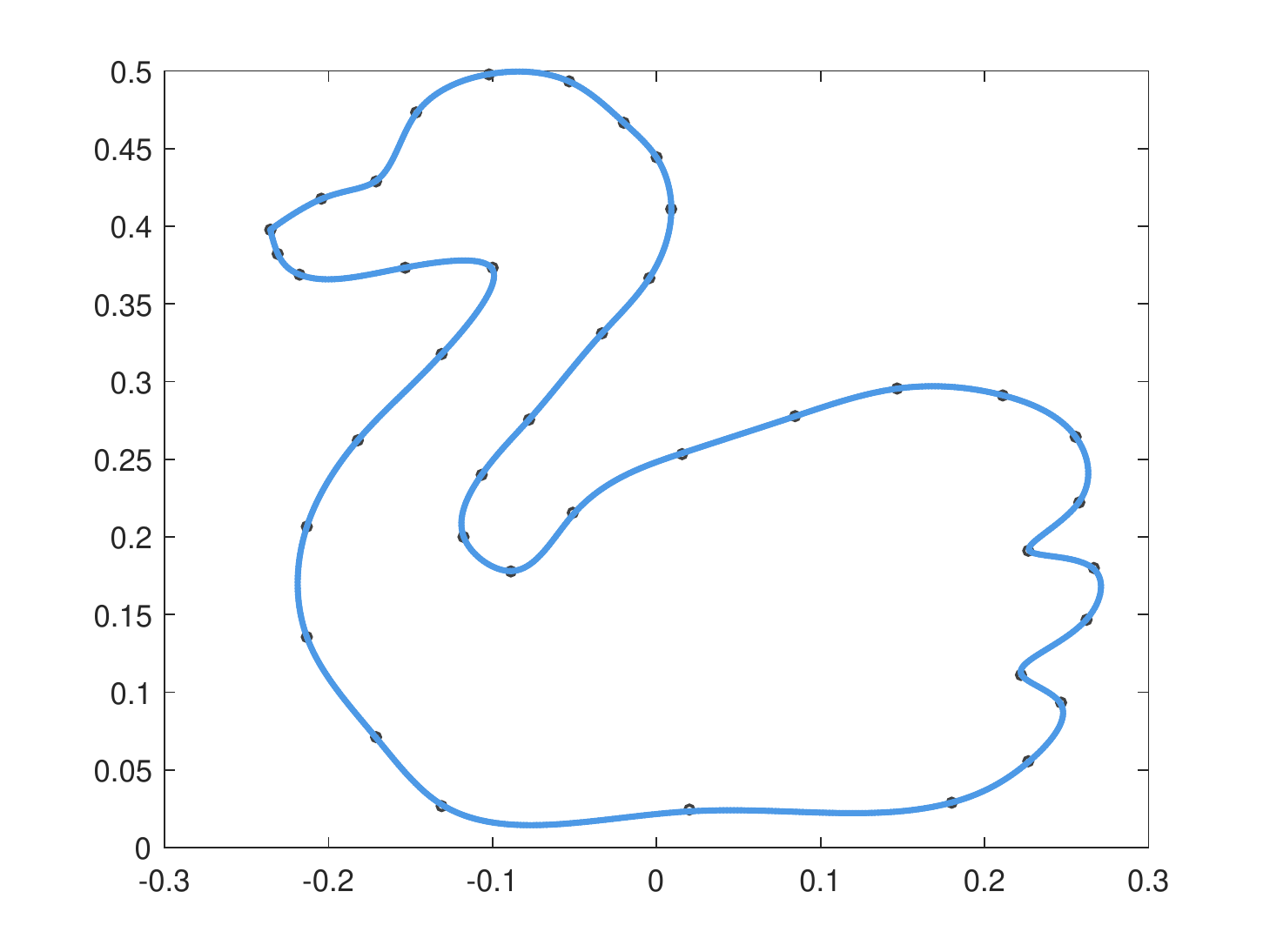}}
\caption{ Cubic B-spline interpolation curves in Example \ref{eg1} obtained by the PIA and the PPIA.}
\label{fig:ex1}
\end{figure}

\begin{figure}[H]
\centering
     \subfigure[$\boldsymbol{C}^{(0)}\left(t\right)$.]{
		\includegraphics[width=0.31\textwidth]{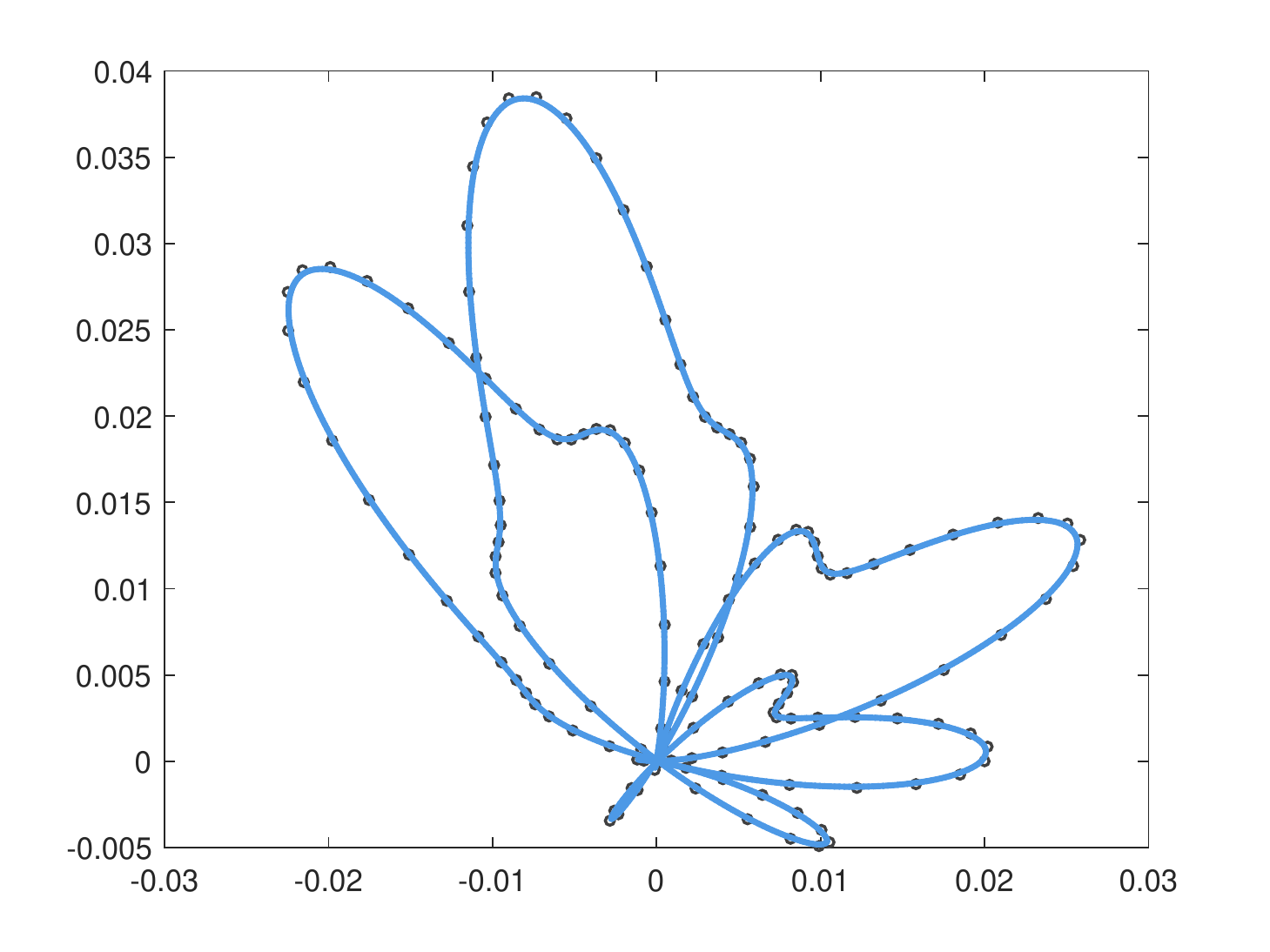}}
     \subfigure[$\boldsymbol{C}^{(5)}\left(t\right)$ by WPIA.]{
		\includegraphics[width=0.31\textwidth]{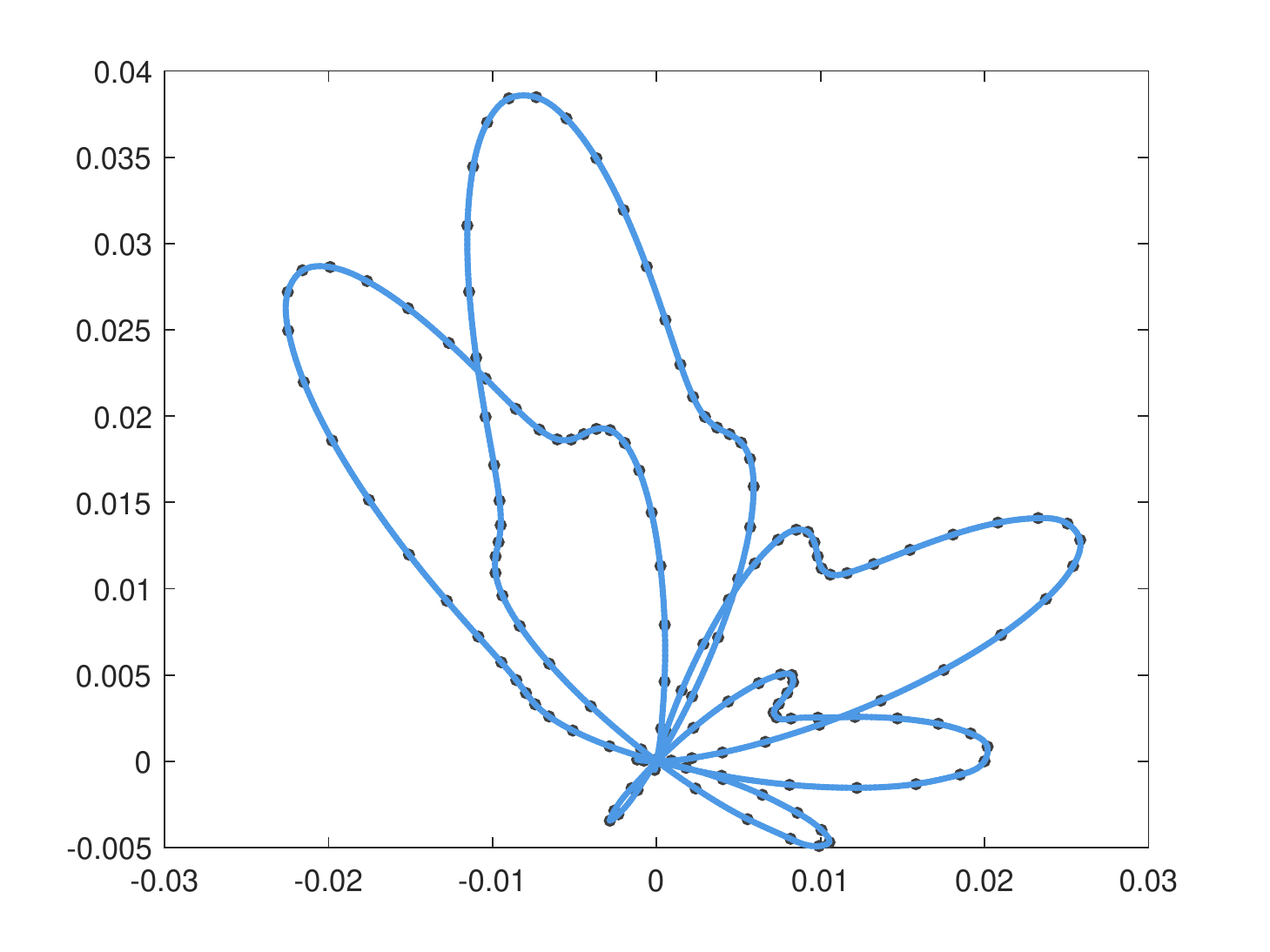}}
     \subfigure[$\boldsymbol{C}^{(5)}\left(t\right)$ by PWPIA.]{
		\includegraphics[width=0.31\textwidth]{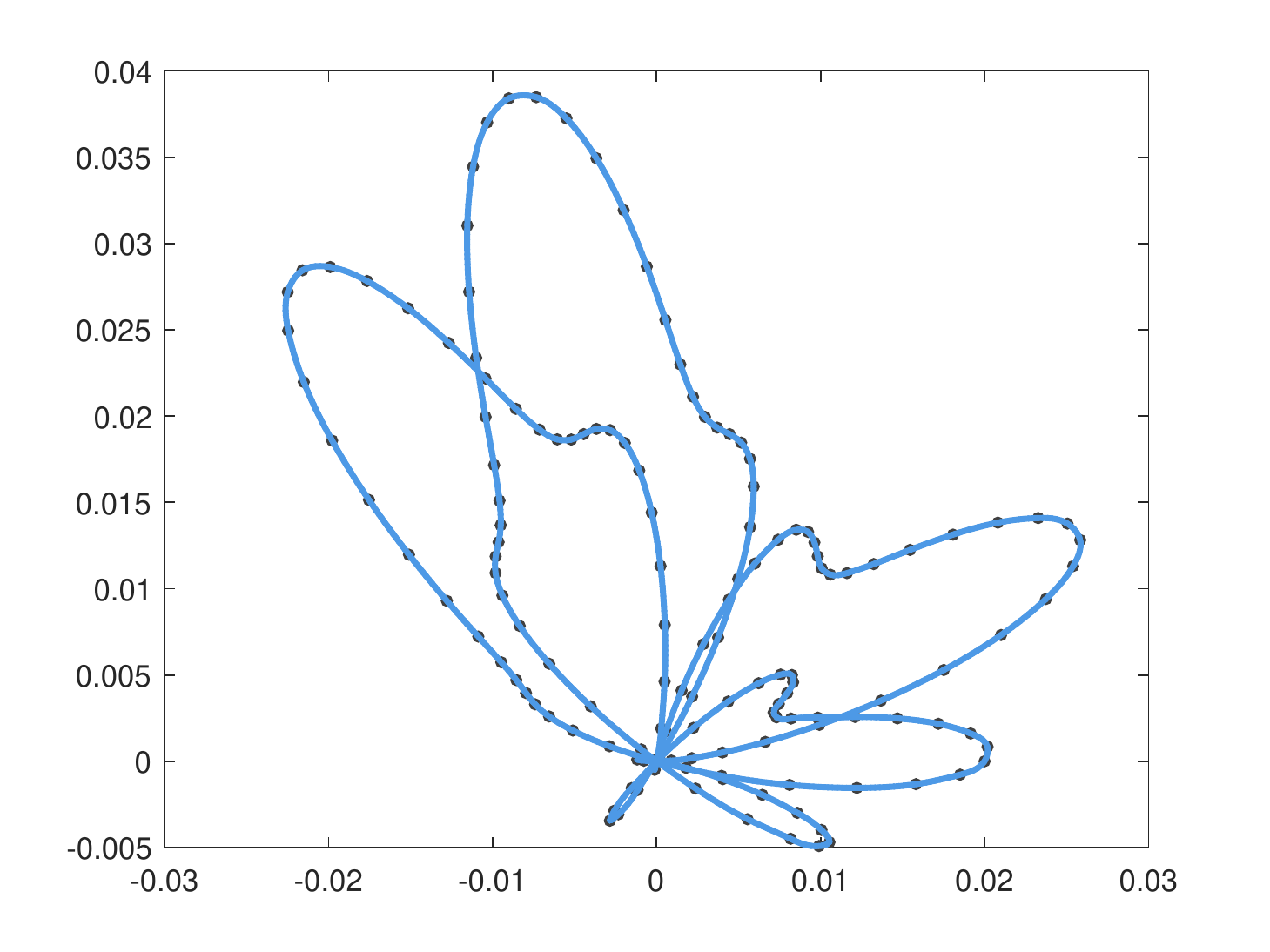}}
\caption{  Cubic B-spline interpolation curves in Example \ref{eg2} obtained by the WPIA and the PWPIA.}
\label{fig:ex2}
\end{figure}

\begin{figure}[H]
\centering
     \subfigure[$\boldsymbol{C}^{(0)}\left(t\right)$.]{
		\includegraphics[width=0.31\textwidth]{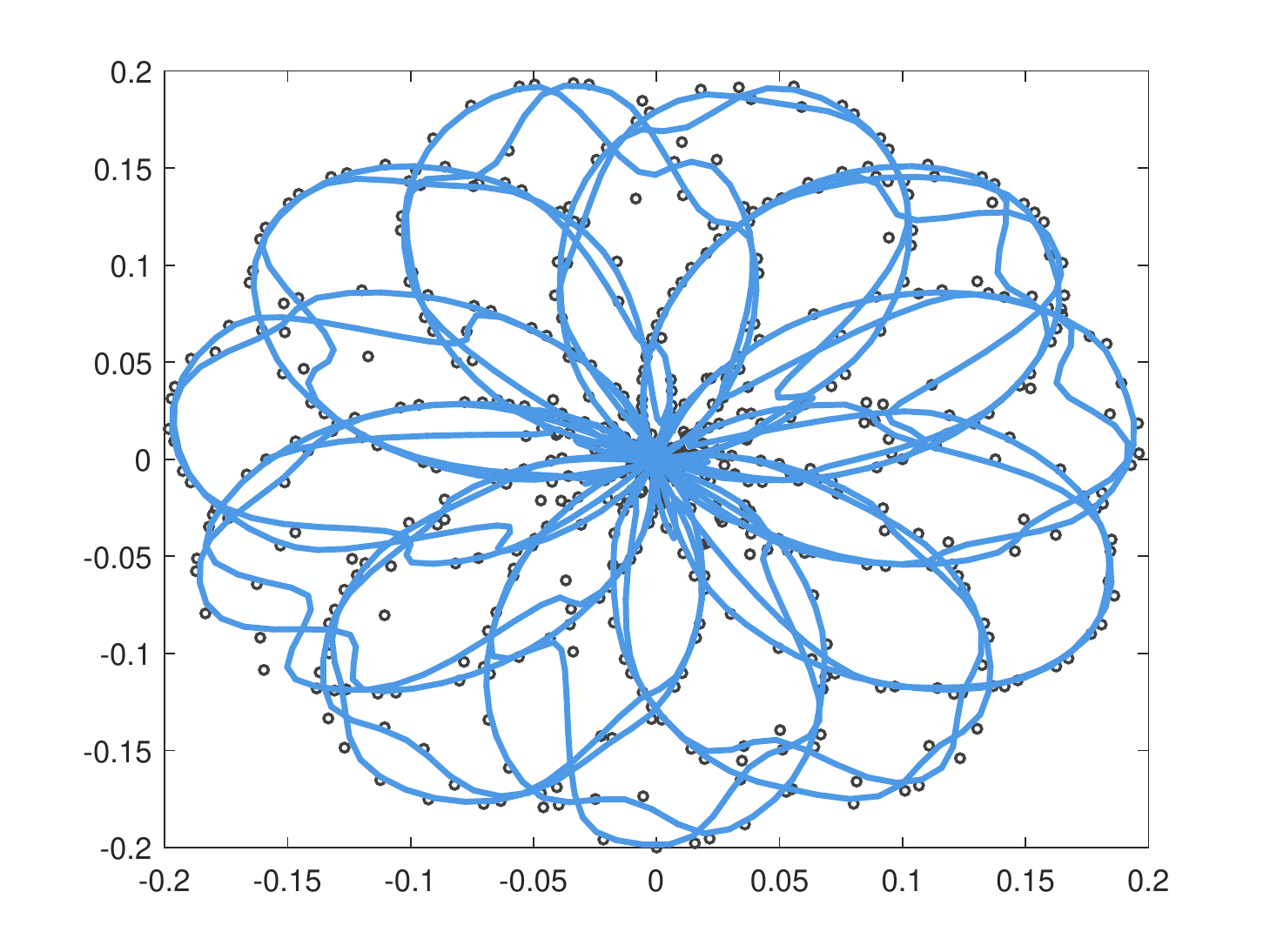}}
     \subfigure[$\boldsymbol{C}^{(5)}\left(t\right)$ by Jacobi-PIA.]{
		\includegraphics[width=0.31\textwidth]{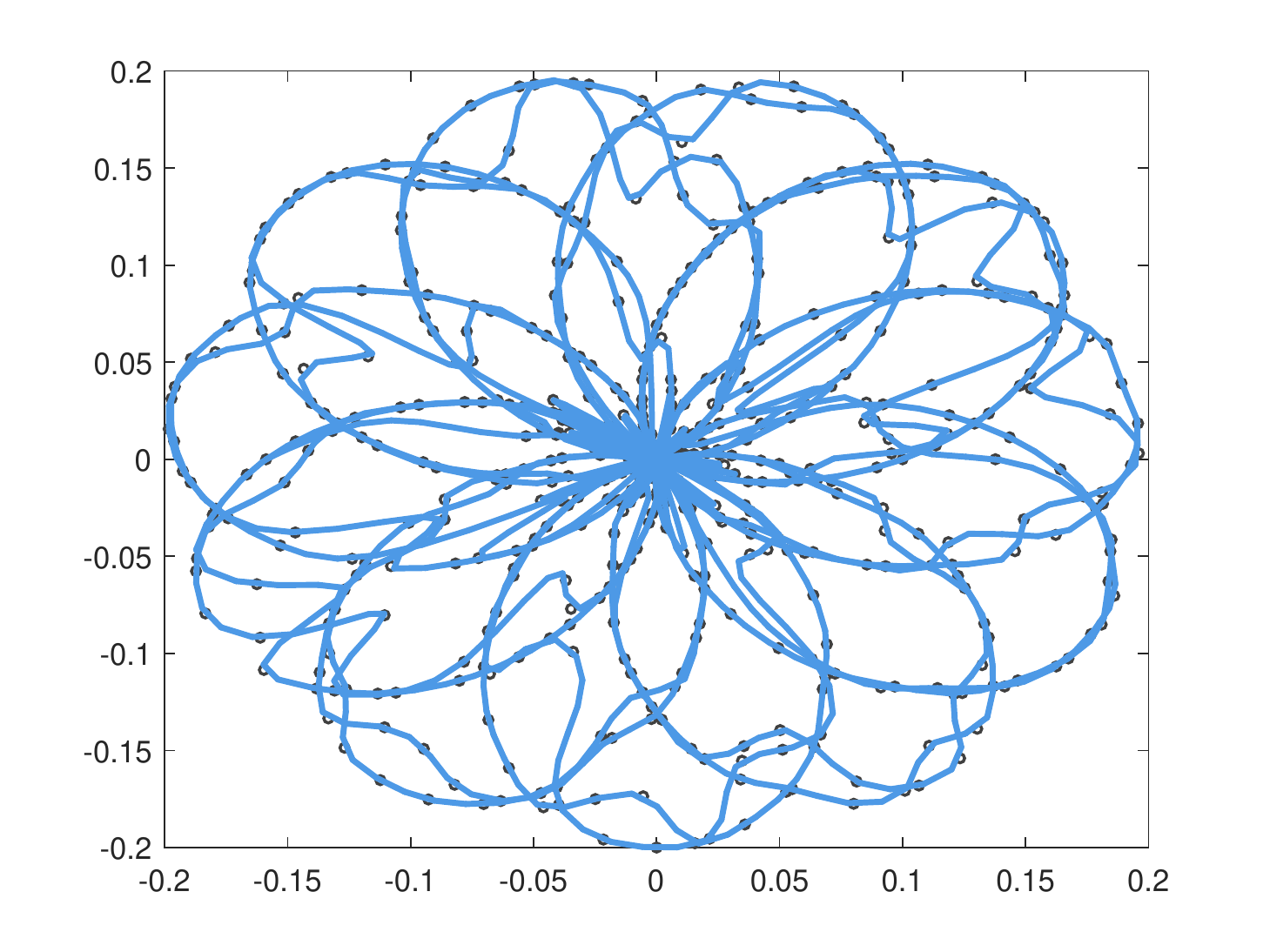}}
     \subfigure[$\boldsymbol{C}^{(0)}\left(t\right)$ by PJacobi-PIA.]{
		\includegraphics[width=0.31\textwidth]{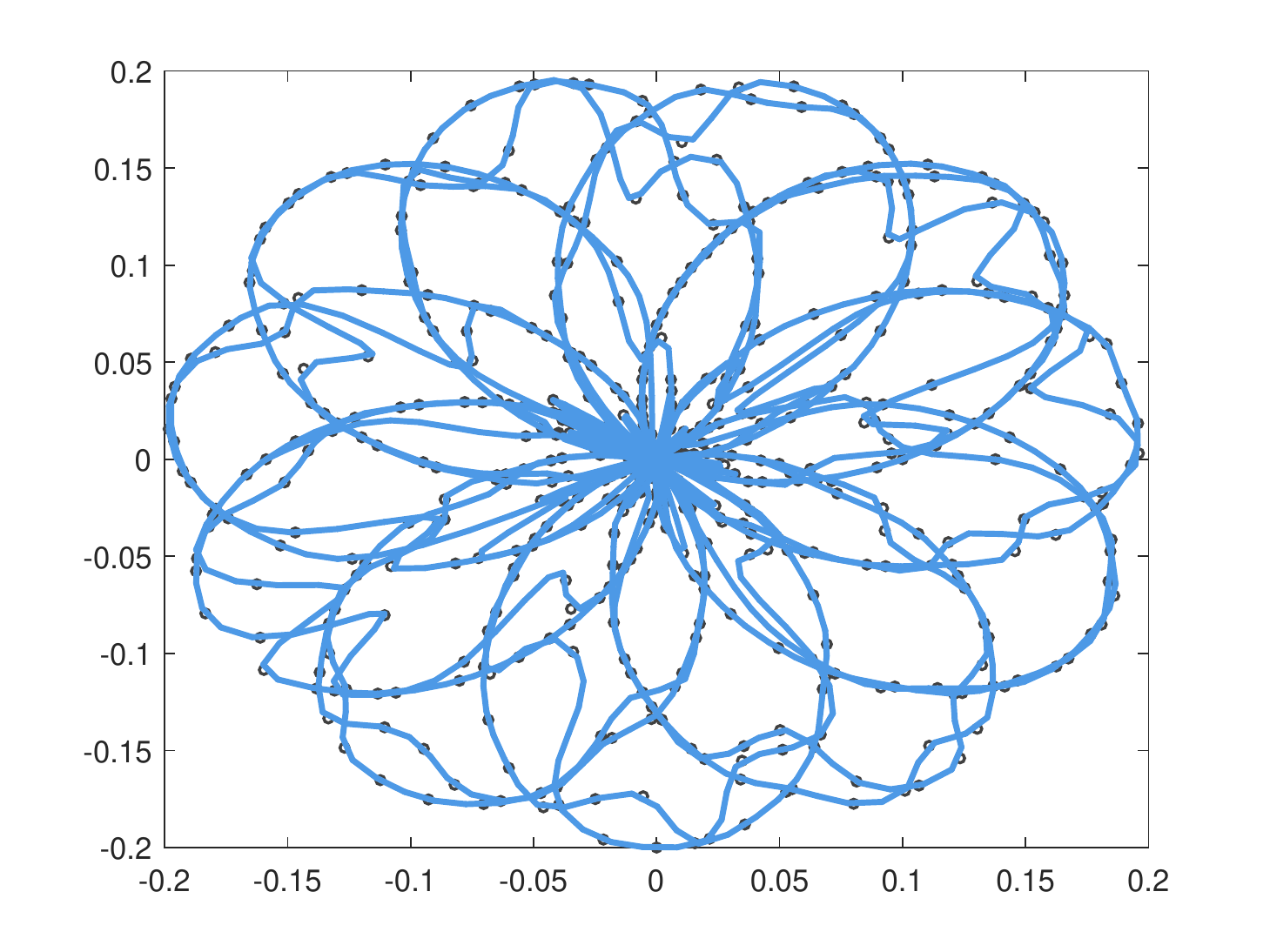}}
\caption{  Cubic B-spline interpolation curves in Example \ref{eg3} obtained by the Jacobi-PIA and the PJacobi-PIA.}
\label{fig:ex3}
\end{figure}

\begin{figure}[H]
\centering
     \subfigure[$\boldsymbol{C}^{(0)}\left(t\right)$.]{
		\includegraphics[width=0.31\textwidth]{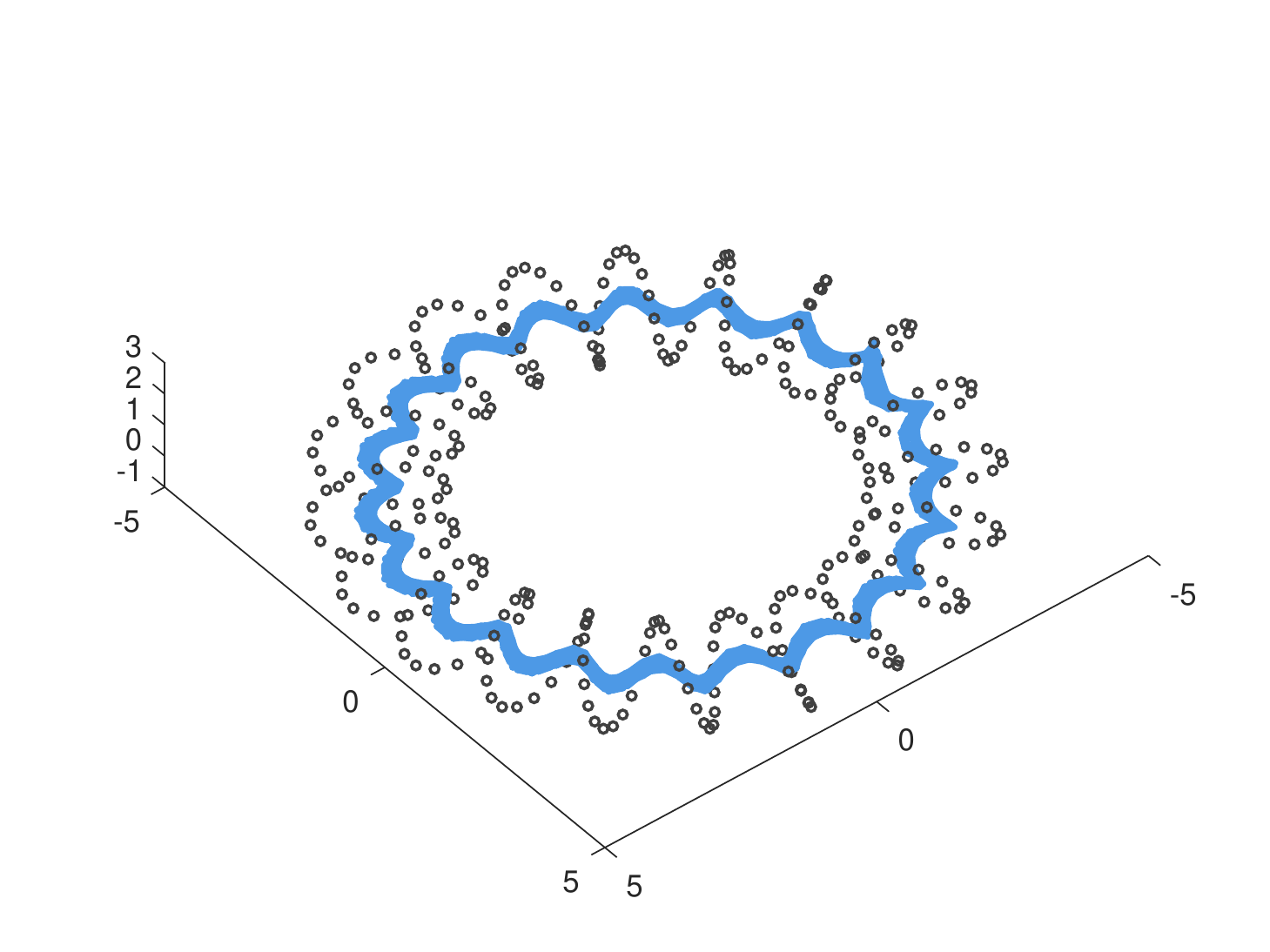}}
     \subfigure[$\boldsymbol{C}^{(5)}\left(t\right)$ by GS-PIA.]{
		\includegraphics[width=0.31\textwidth]{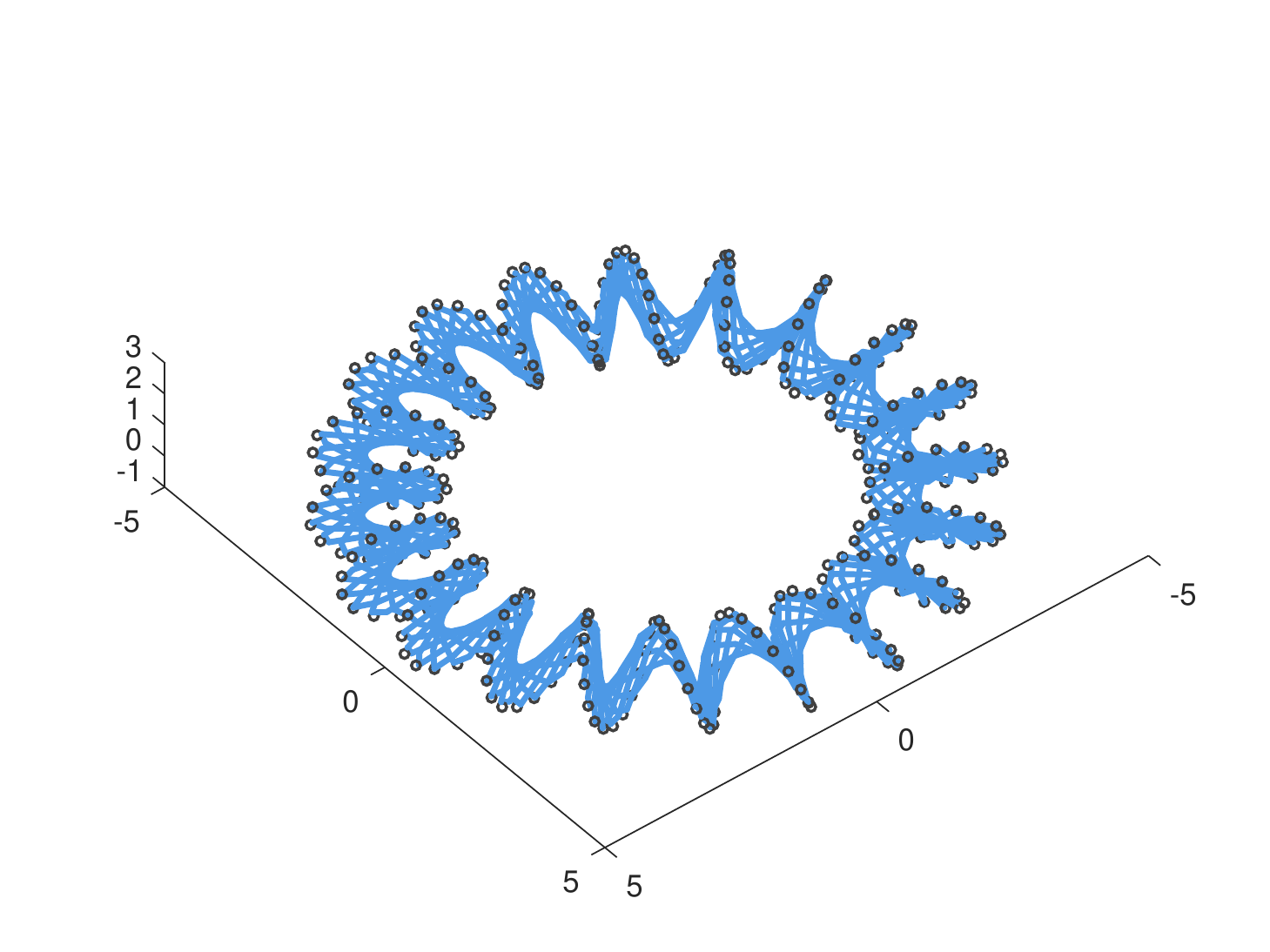}}
     \subfigure[$\boldsymbol{C}^{(5)}\left(t\right)$ by PGS-PIA.]{
		\includegraphics[width=0.31\textwidth]{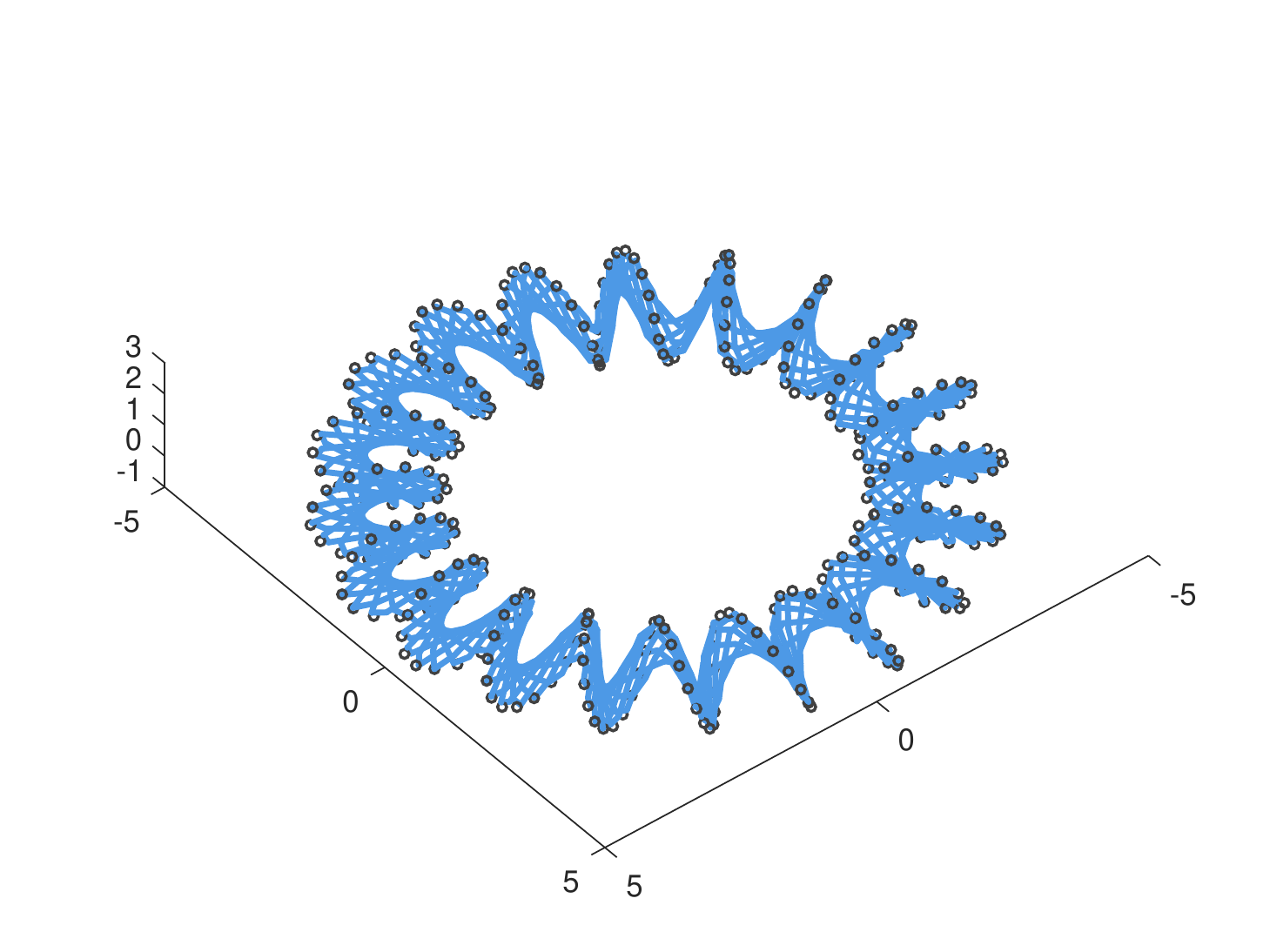}}
\caption{  Cubic B-spline interpolation curves in Example \ref{eg4} obtained by the GS-PIA and the PGS-PIA.}
\label{fig:ex4}
\end{figure}

\begin{figure}[H]
\centering
     \subfigure[$\boldsymbol{C}^{(0)}\left(t\right)$.]{
		\includegraphics[width=0.31\textwidth]{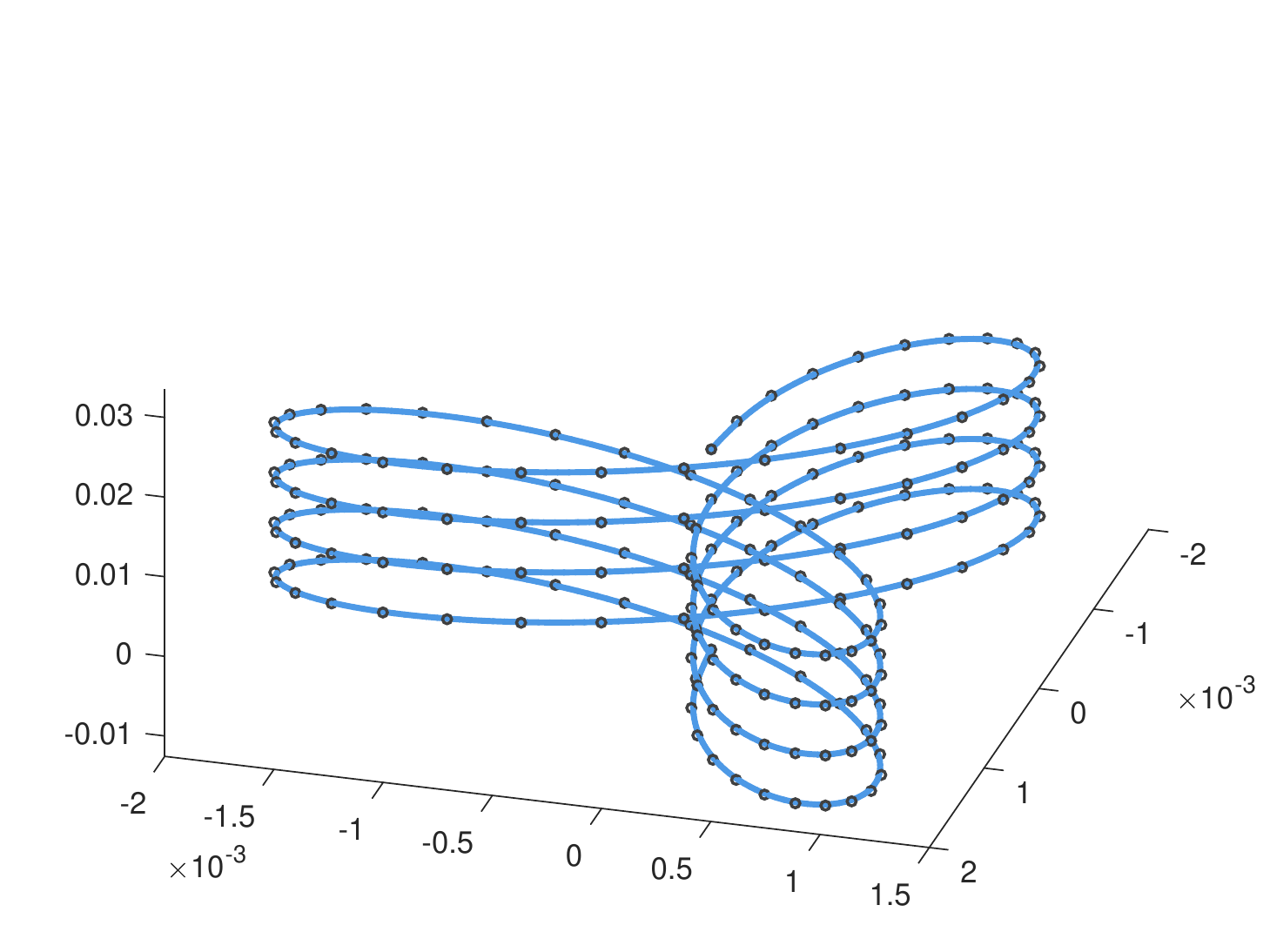}}
     \subfigure[$\boldsymbol{C}^{(5)}\left(t\right)$ by SOR-PIA.]{
		\includegraphics[width=0.31\textwidth]{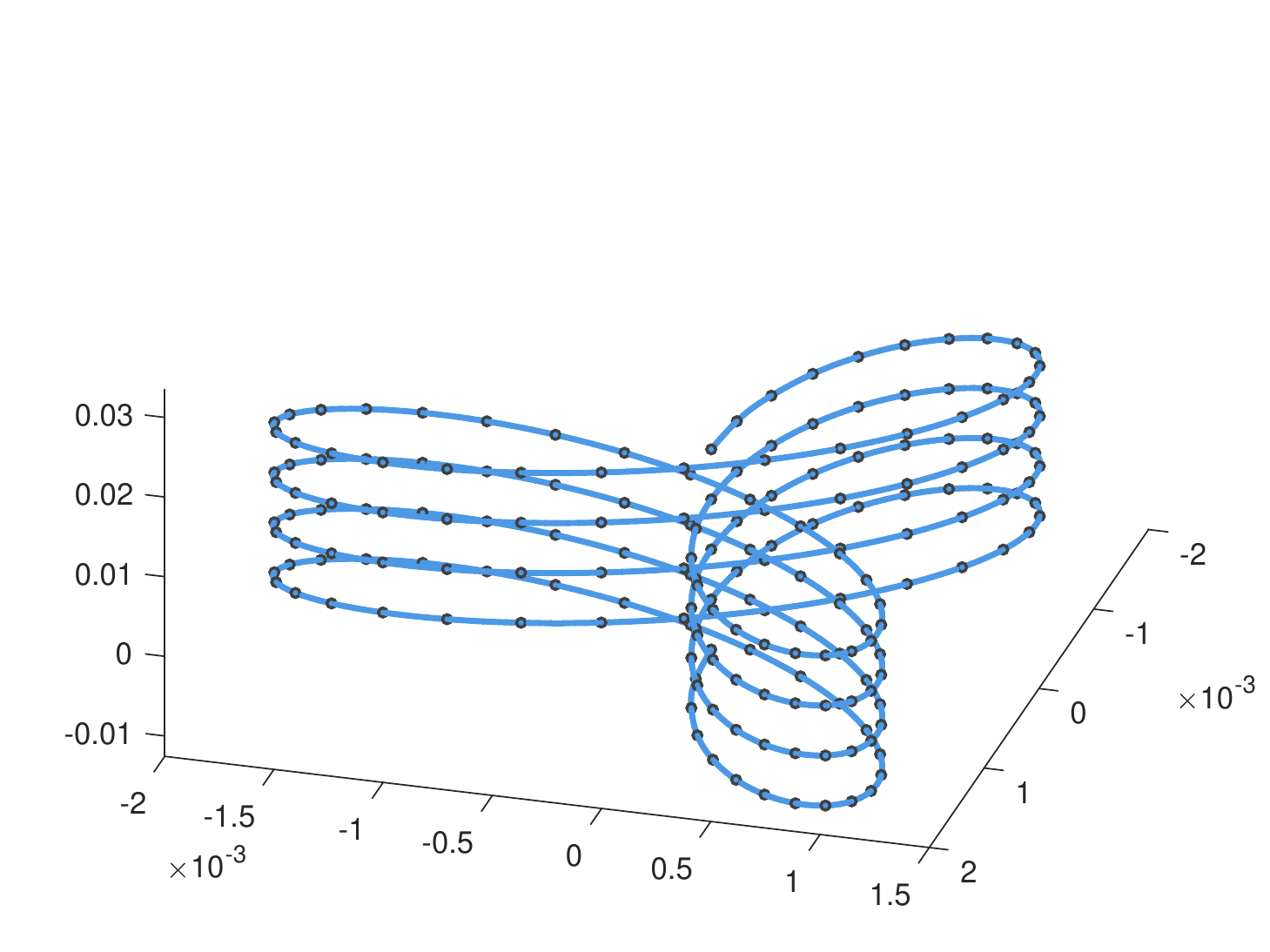}}
     \subfigure[$\boldsymbol{C}^{(5)}\left(t\right)$ by PSOR-PIA.]{
		\includegraphics[width=0.31\textwidth]{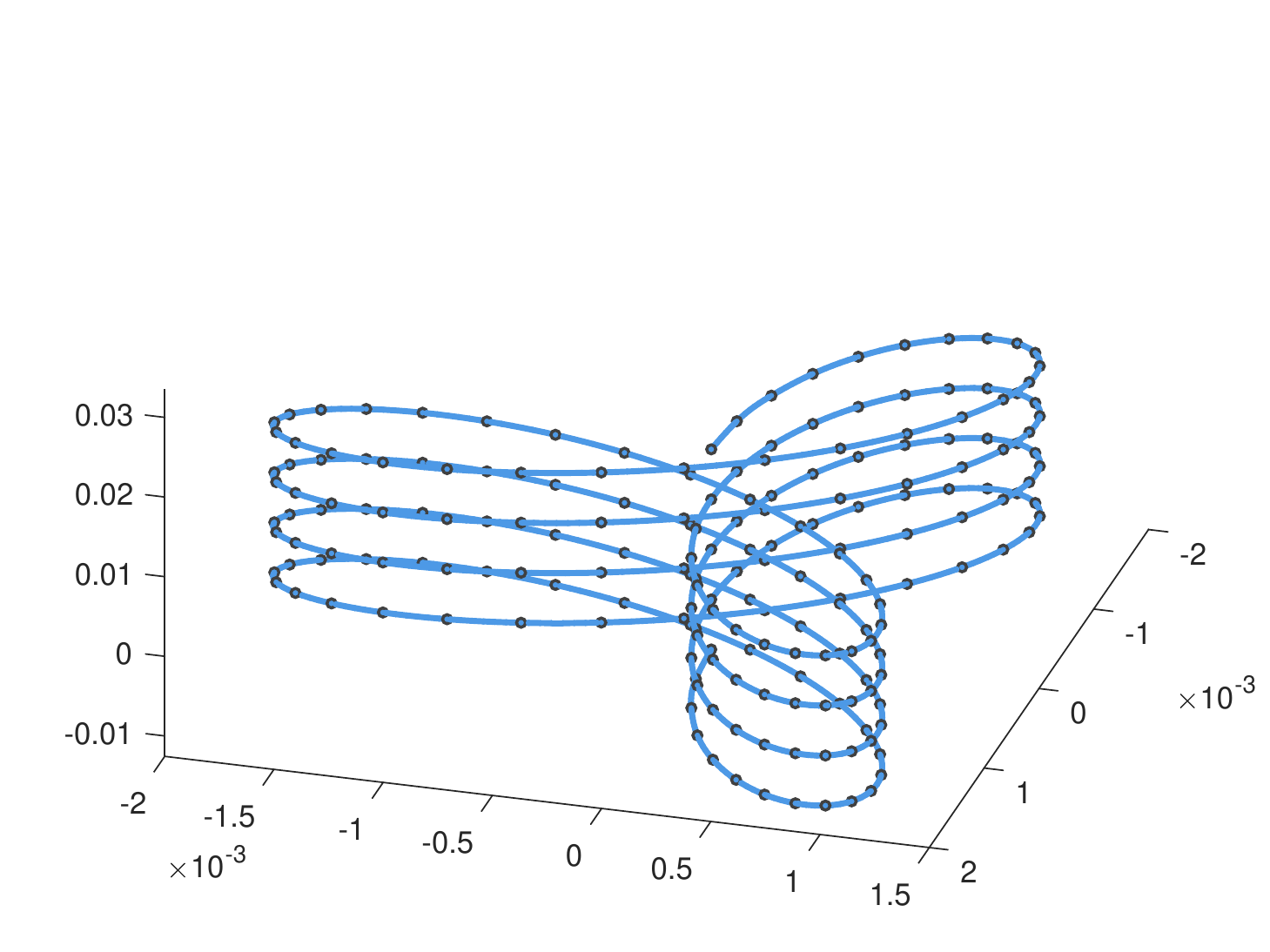}}
\caption{  Cubic B-spline interpolation curves in Example \ref{eg5} obtained by the SOR-PIA and the PSOR-PIA.}
\label{fig:ex5}
\end{figure}

\section{Conclusions}\label{sec5}
In this paper, we have studied the preconditioning technique for the PIA and its variants to interpolate a given set of points. After constructing the preconditioner, we exploited several preconditioned GIMs, which are the preconditioned PIA, the preconditioned WPIA, the preconditioned Jacobi--PIA, the preconditioned GS--PIA, and the preconditioned SOR--PIA. We have shown that the proposed preconditioned GIMs converge. Our numerical experiments demonstrate that the preconditioned GIMs converge and require fewer iterations than those without preconditioning. Moreover, the preconditioning technique is simple and the preprocessing cost is low.
\section*{Acknowledgments}
This study was funded by Natural Science Foundation of China (No. 12101225), Natural Science Foundation of Hunan Province (No. 2021JJ30373), Scientific Research Funds of Hunan Provincial Education Department (No. 21B0790), and Open Research Fund Program of Data Recovery
Key Laboratory of Sichuan Province (No. DRN2104).

\end{document}